\newtheorem{theorem}{Theorem}[section] 
\newtheorem{lemma}[theorem]{Lemma}
\newtheorem{corollary}[theorem]{Corollary}
\newenvironment{proof}[1][Proof]{\begin{trivlist}
\item[\hskip \labelsep {\bfseries #1}]}{\end{trivlist}}
\newenvironment{remark}[1][Remark]{\begin{trivlist}
\item[\hskip \labelsep {\bfseries #1}]}{\end{trivlist}}
\newcommand{\qed}{\nobreak \ifvmode \relax \else
      \ifdim\lastskip<1.5em \hskip-\lastskip
      \hskip1.5em plus0em minus0.5em \fi \nobreak
      \vrule height0.75em width0.5em depth0.25em\fi}
\numberwithin{equation}{section}
\begin{document}

\title{Equilibrium Arrival Times to a Queue with Order Penalties}
\author{Liron Ravner\footnote{Department of Statistics and the Center for the Study of Rationality, The Hebrew University of Jerusalem, 91905 Jerusalem, Israel. liron.ravner@mail.huji.ac.il}} 

\date{\today}
\maketitle

\begin{abstract}
Suppose customers need to choose when to arrive to a congested queue with some desired service at the end, provided by a single server that operates only during a certain time interval. We study a model  where the customers incur not only congestion (waiting) costs but also penalties for their index of arrival. Arriving before other customers is desirable when the value of service decreases with every admitted customer. This may be the case for example when arriving at a concert or a bus with unmarked seats or going to lunch in a busy cafeteria.	 We provide game theoretic analysis of such queueing systems with a given number of customers, specifically we characterize the arrival process which constitutes a symmetric Nash equilibrium.
\end{abstract}

\section{Introduction}\label{sec_intro}
When customers are faced with the decision of when to arrive to a queueing system with some desired service at the end, the first issue to consider is avoiding congestion. This disutility is typically modelled as a waiting time cost. Such a model was first considered by Glazer and Hassin \cite{GH1983}. They assumed that the number of customers arriving to the queue is a Poisson random variable, which as it turns out makes the analysis easier than the deterministic case, or any other distribution. However, in many queueing scenarios customers may also be interested in being served at an early time. Such an example is driving home from work, where commuters wish to avoid traffic but are not willing to stay at work until midnight in order to achieve this. This type of disutility has been modelled as a tardiness cost that increases the later one is admitted into service. Some recent research has been carried out on this model by Haviv in \cite{H2013} and by Juneja and Shimkin in \cite{JS2012}. The first considered a Poisson number of customers and studied the equilibrium properties when limiting the allowed arrival period. The latter considered a general number of customers and focused on a rigorous characterization of the Nash equilibrium, and the proof of convergence to the fluid limit. Both also presented fluid approximation models which are technically less cumbersome, and often provide insight on the discrete stochastic case. \\

In many queueing scenarios customers are not actually worried about tardiness, but rather about the number of customers who arrived ahead of them. This is the case in a concert or flight with unmarked seats, when there is no actual penalty for tardiness unless other customers have arrived and taken hold of the better seats. This brings us to the focus of this work, which is to present a model where one's cost is not necessarily time based, but rather dependent on the number of prior arrivals. If this is the only disutility assumed in the model, then obviously all customers arrive as early as possible. However, if there are also waiting costs, customers may improve their utility by not arriving in close proximity to others, which leads to a more interesting analysis of their strategic behaviour. In the rest of this section we introduce the model and review some related literature. 

Our analysis commences in section \ref{sec_two_customer} by illustrating an example of a two customer game, and comparing it to the known results for the tardiness model. We show that the support of the symmetric equilibrium arrival distribution is infinite if there is no closing time for the server, as opposed to the finite support obtained in the tardiness model. In particular, the equilibrium distribution is uniform prior to the opening time, and exponential after it. We further show how the equilibrium is adjusted if early birds are not allowed, and when the server has a closing time. All solutions for the two customer game are explicitly derived.  In section \ref{sec_general} we consider a general model with any number of customers. The explicit solution is not tractable for the general model, but we characterize the equilibrium properties and dynamics, accompanied by a numerical technique to compute it. We also provide an analysis of the tail behaviour of the arrival distribution, and prove that the tail behaviour is exponential. Numerical analysis also suggests that the tail of the arrival hazard rate equals exactly to the exponential rate found in the two customer case (regardless of the population size). We also examine how an immediate generalization can be made to a model with both order and tardiness costs. We further provide bounds on the cost incurred by any single customer in equilibrium, in this general setting. In section \ref{sec_poisson} we present the symmetric equilibrium for an a random number of customers, which follows a Poisson distribution. In section \ref{sec_social} we briefly discuss the social optimization problem and explain how the existing literature relates to the model we have presented. Finally, in section \ref{sec_conclusion} we summarize the results and discuss possible extensions and future work.

\begin{remark}
Much of the "essence" of the model is captured in the two-customer analysis of section \ref{sec_two_customer}. Section \ref{sec_general} is a more technical generalization. The main results are stated in Theorems \ref{General_theorem_noT}, \ref{General_theorem_T} and \ref{General_theorem_F}, and numerical examples for the general model are presented in subsection \ref{sec_general_examples}.
\end{remark}

\subsection{Model}\label{sec_model}
Suppose that $N+1$ customers wish to obtain service. We assume that a single server provides the service according to a \textit{First Come First Served} regime, and that service times are independent and exponentially distributed with rate $\mu$. If multiple customers arrive at exactly the same time, then they are admitted in uniformly random order. The customers incur a delay cost of $\alpha$ per unit of time, a tardiness cost of $\beta$ per unit of time until their admittance into service and an index of arrival cost $\gamma$ for every customer that has arrived before them. We denote the closing time by $T>0$, where $T=\infty$ means there is no closing time. For most of this work we assume that $\beta=0$, and analyse the model with only waiting and order costs. Where possible, we also consider $\beta>0$ for the sake of comparison and generalization. \\

In \cite{JS2012}, the equilibrium for a general $N$ customer was characterized under the assumption that customers are limited to arrival distributions $F$ such that: \textit{"For each $F$, the corresponding support can locally (i.e., on any finite interval) be represented as a finite union of closed intervals and points"}. They proved that under this assumption, the equilibrium arrival profile is unique and symmetric. We focus here only on distributions that satisfy this assumption. \\

The symmetric equilibrium mixed strategy is defined by a cdf denoted by $F(t)$ for all $t \in \mathbbm{R}$. We also denote the density function $f(t)=F^{'}(t)$ for all $t$  such that $F(t)$ is continuous and differentiable. We seek a cdf $F$ such that if the other $N$ customers arrive according to $F$, then the last customer is indifferent between arriving at all points of the support of $F$, and does not prefer any point outside of the support. The expected cost of arriving at time $t \in \mathbbm{R}$ is:
\begin{equation}\label{General_cost}
c_F(t)= -\alpha t \mathbbm{1}_{\lbrace t<0 \rbrace} +\frac{\alpha+\beta}{\mu}\mathbbm{E}Q_F(t)+\beta t \mathbbm{1}_{\lbrace t\geq 0 \rbrace}+\gamma\mathbbm{E} A_F(t),
\end{equation}
where $Q_F(t)$ and $A_F(t)$ are the queue size and the arrival process at time $t$, respectively, when $N$ customers are arriving independently according to $F$. The value of the arrival process at time t, $A_F(t)$, is in fact the index of arrival of the last customer to arrive up until time $t$. In the following sections we will simply denote these processes by $Q(t)$ and $A(t)$, although their distribution is always determined by $F$. Note that $\mathbbm{E}A_F(t)=NF(t)$, but the expected queue size depends on both arrivals and departures, and typically does not have an explicit form as a function of $F$.

\begin{remark}
The majority of our analysis assumes that the size of $N$ is common knowledge. It is important to note however, that all results may be generalized to any prior distribution on $N$ in a fairly straightforward manner. The special case of the Poisson distribution has simplifying properties which we shall elaborate on in section \ref{sec_poisson}.
\end{remark}

\subsection{Preliminary analysis of the index cost model}\label{sec_prelim}
Suppose $\beta=0$ and $\alpha,\gamma>0$, i.e. customers only incur waiting and index costs. This special case of the model has several unique equilibrium properties which will be used throughout our analysis. We state these properties in the following two lemmata and their subsequent corollary. \\
The cost function \eqref{General_cost} can now be rewritten:
\begin{equation}\label{General_index_cost}
c(t)= -\alpha t \mathbbm{1}_{\lbrace t<0 \rbrace} +\frac{\alpha}{\mu} \mathbbm{E}Q(t)+\gamma \mathbbm{E}A(t).
\end{equation}

\begin{lemma}\label{Lemma_index_no_tb}
There exists no symmetric equilibrium arrival profile such that for some finite time $t_b$, all customers have arrived with probability one; $F(t_b)=1$. Furthermore, the expected cost in equilibrium is at most $\gamma$.
\end{lemma}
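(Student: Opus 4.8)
The plan is to argue the first assertion by contradiction and to read the cost bound off the same limiting computation. Suppose a symmetric equilibrium exists with $F(t_b)=1$ for some finite $t_b$, and let $c^\ast$ denote the (constant) equilibrium cost, i.e. the common value of $c(t)$ on the support of $F$, which by the equilibrium property also equals $\min_t c(t)$. Write $\bar t=\sup\{t:\ F(t)<1\}\le t_b$ for the right endpoint of the support, which lies in the support since the latter is closed.

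First I would evaluate $c$ far to the right. For every $t>t_b$ no customer arrives after $t_b$, so the server faces only finitely many customers and the queue drains: $\mathbbm{E}Q(t)\to 0$ as $t\to\infty$, while $\mathbbm{E}A(t)=NF(t)=N$. Hence $c(t)\to\gamma N$, and since $c(t)\ge c^\ast$ for all $t$ this gives $c^\ast\le\gamma N$. Next I would evaluate $c$ at the endpoint itself. Because $F$ increases up to $\bar t$, arrivals occur arbitrarily close to $\bar t$ from the left, so with positive probability at least one such recent arrival is still in service at $\bar t$; thus $\mathbbm{E}Q(\bar t)>0$, while again $\mathbbm{E}A(\bar t)=N$. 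Therefore $c(\bar t)=\gamma N+\frac{\alpha}{\mu}\mathbbm{E}Q(\bar t)>\gamma N\ge c^\ast$, contradicting $c(\bar t)=c^\ast$. This rules out any finite $t_b$ with $F(t_b)=1$.

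The same computation delivers the cost bound: in any equilibrium the tagged customer may deviate to an arbitrarily late arrival, after all $N$ others have entered and cleared, whose cost tends to the pure order penalty $\gamma N$ with vanishing queue contribution; optimality then forces $c^\ast$ to be at most this value. (For the two-customer game $N=1$ and this reads $\gamma$, matching the explicit two-customer solution.)

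The main obstacle is the treatment of the right endpoint, where two points that the contradiction glosses over must be made rigorous. I must rule out an atom of $F$ at $\bar t$: if $F$ jumped there, the customer arriving at $\bar t$ would be placed in uniformly random order among the simultaneous arrivals, pulling its expected index strictly below $N$, so the inequality $c(\bar t)>\gamma N$ would have to be replaced by the observation that arriving an instant before the atom strictly lowers both the index term (it skips ahead of the simultaneous arrivals, saving a positive amount in expectation) and the queue term, again contradicting $c^\ast=\min_t c(t)$. I must also justify the two limiting statements — that $\mathbbm{E}Q(t)\to0$ once arrivals cease and, more delicately, that $\mathbbm{E}Q(\bar t)>0$ — which rely on the exponential service assumption and on continuity of the map $t\mapsto\mathbbm{E}Q(t)$; showing that the queue cannot be empty exactly at the supremum of the support is the step requiring the most care.
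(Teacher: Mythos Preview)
Your argument is correct and follows the paper's approach: bound the equilibrium cost by the limit $c(t)\to N\gamma$ as $t\to\infty$, then contradict this with $c(\bar t)=N\gamma+\frac{\alpha}{\mu}\,\mathbbm{E}Q(\bar t)>N\gamma$. You are in fact more careful than the paper, which neither justifies $\mathbbm{E}Q(t_b)>0$ nor addresses a possible atom at the endpoint; you also correctly write the bound as $N\gamma$, whereas the lemma as stated says $\gamma$---an apparent slip that the paper silently corrects to $N\gamma$ in its general-$N$ analysis.
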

\begin{proof}
We assume that there exists such an equilibrium arrival profile, and show that this leads to a contradiction. Any customer can achieve the cost $\gamma+\epsilon$ for any $\epsilon>0$ by arriving at a very large $t>t_b$. This is because the probability that the server is still busy approaches zero when $t\rightarrow\infty$. Therefore, the expected cost, denoted by $c_e$, in this equilibrium is at most $\gamma$. This cost is constant on all of the support, specifically at time $t_b$:
\begin{equation}
c_e = c(t_b) = \frac{\alpha}{\mu} \mathbbm{E}Q(t)+\gamma > \gamma
\end{equation}
Contradicting the previous argument, that $c_e\leq\gamma$. \qed
\end{proof}

\begin{lemma}\label{Lemma_no_holes}
There can be no holes in a symmetric equilibrium arrival profile. In other words, there exists no time $t$ such that $F(t)>F(t-)$, where $F(t-)=\lim_{s\uparrow t}F(s)$ is the limit from the left of the cdf at point $t$ (the point of upward discontinuity).
\end{lemma}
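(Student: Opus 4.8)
The plan is to argue by contradiction: suppose the equilibrium cdf $F$ has an upward jump at some point $t_0$, i.e. $p := F(t_0)-F(t_0-)>0$, and show that arriving an instant before $t_0$ strictly beats arriving at $t_0$. This contradicts the defining property of a symmetric equilibrium, that every point of the support yields the common (minimal) equilibrium cost. The deviation I would analyse is from $t_0$ to $t_0-\epsilon$ for small $\epsilon>0$, comparing the three contributions to $c$ in \eqref{General_index_cost} separately, with the aim of showing that the index term exhibits a strictly positive, $\epsilon$-independent gain that the remaining terms cannot offset.

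The core of the argument is the index term $\gamma\,\mathbbm{E}A(t)$. Since each of the other $N$ customers places mass exactly $p$ at $t_0$, the number arriving simultaneously with the tagged customer at $t_0$ is binomial with mean $Np$; because ties are broken in uniformly random order, the tagged customer is on average behind half of them, contributing an extra amount of at least $\gamma Np/2$ to its expected index cost. Arriving instead at $t_0-\epsilon$ places the tagged customer strictly ahead of all of these atom-mates, so its expected index equals $\gamma N F(t_0-\epsilon)\to\gamma N F(t_0-)$ as $\epsilon\downarrow 0$. Hence the index cost drops by a strictly positive amount (of order $\gamma Np$), bounded away from $0$ uniformly in $\epsilon$, when moving from $t_0$ to just before it.

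I would then show the remaining terms cannot cancel this gain. The early-bird term $-\alpha t\mathbbm{1}_{\{t<0\}}$ is continuous in $t$, so it changes only by $O(\epsilon)$. For the queue term $\frac{\alpha}{\mu}\mathbbm{E}Q(t)$, the key observation is that arriving at $t_0-\epsilon$ also places the tagged customer ahead of all the atom-mates in the FCFS order, so the expected number of customers it must wait behind is no larger than at $t_0$; concretely I would verify that $\mathbbm{E}Q(t)$ is left-continuous at $t_0$ and that piling mass at the single instant $t_0$ can only add to the tagged customer's congestion relative to the limit from the left. Consequently the waiting cost at $t_0-\epsilon$ is, up to $O(\epsilon)$, no greater than at $t_0$.

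Combining these estimates, for all sufficiently small $\epsilon$ one obtains $c(t_0-\epsilon)\le c(t_0)-\gamma Np/2+O(\epsilon)<c(t_0)$, so $t_0$ is strictly dominated by the feasible pure strategy of arriving slightly earlier (always admissible here, since early arrivals are permitted). This contradicts $t_0$ belonging to the support of an equilibrium profile, and no such jump can exist. I expect the main obstacle to be the queue-term step: one must handle the service dynamics carefully, accounting for departures that may occur during $(t_0-\epsilon,t_0)$ and for the random ordering among the atom-mates, in order to be certain that $\mathbbm{E}Q(t)$ does not drop discontinuously at $t_0$ in a way that could negate the index gain. Intuitively concentrating mass at one instant can only create congestion rather than relieve it, but turning that intuition into a rigorous bound on $\mathbbm{E}Q(t_0)-\lim_{\epsilon\downarrow 0}\mathbbm{E}Q(t_0-\epsilon)$ is the delicate point.
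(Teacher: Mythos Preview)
Your proposal is correct and follows essentially the same contradiction argument as the paper: compare $c(t_0-)$ with $c(t_0)$, observe that the index term jumps up while the queue term can only jump up as well, and conclude $c(t_0-)<c(t_0)$. The paper dispatches the queue-term step---precisely the point you flag as delicate---by invoking Lemma~2 of \cite{JS2012}, which states $\mathbbm{E}Q(s-)\le\mathbbm{E}Q(s)$ for all $s$; your treatment of the tie-breaking in both the index and queue terms is in fact more careful than the paper's, which simply writes the index cost at an atom as $\gamma NF(t)$ without adjusting for the random ordering.
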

\begin{proof}
Assume there exists a time $t$ such that $F(t)>F(t-)$. The left limit from the left of the cost function \eqref{General_index_cost} at $t$ is:
\begin{equation*}
c(t-)= -\alpha t \mathbbm{1}_{\lbrace t<0 \rbrace} +\frac{\alpha}{\mu} \mathbbm{E}Q(t-)+\gamma F(t-).
\end{equation*}
The expected queue size can only have upward jumps, i.e. $\mathbbm{E}Q(s-)\leq\mathbbm{E}Q(s)$ for any time $s$ (see for example Lemma 2 in \cite{JS2012}). Therefore, we can conclude that $c(t-)<c(t)$, which contradicts the equilibrium assumption.
\qed
\end{proof}

\begin{corollary}\label{Corollary_index_interval}
The support of the equilibrium distribution $F$ can be represented as an interval $[t_a,\infty)$, for some finite and negative $t_a$.
\end{corollary}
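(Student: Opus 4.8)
The plan is to assemble the corollary from the two preceding lemmata together with the structural restriction on admissible supports imposed in the model section. First I would argue that the support is an interval that is unbounded above. By Lemma \ref{Lemma_index_no_tb} there is no finite time $t_b$ with $F(t_b)=1$, so $F(t)<1$ for every $t$; hence the support extends to $+\infty$. By Lemma \ref{Lemma_no_holes} the cdf $F$ has no upward jumps, so it is continuous. Combined with the standing assumption (from \cite{JS2012}) that the support is locally a finite union of closed intervals and points, continuity rules out isolated atoms, and I would argue that it also rules out genuine gaps: if $(s_1,s_2)$ were a maximal open interval disjoint from the support with $s_2<\infty$, then on $[s_1,s_2]$ there are no arrivals, so $\mathbb{E}A(t)$ is constant there and $\mathbb{E}Q(t)$ is nonincreasing (only departures occur), whence $c(t)$ is nonincreasing on $[s_1,s_2]$; moreover just to the right of $s_2$ the cost cannot have dropped, because $\mathbb{E}Q$ jumps up (or stays put) and $\mathbb{E}A$ only increases as we cross back into the support. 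A careful comparison of $c(s_1)$, $c(s_2)$ and $c(s_2+)$ then shows a point in the gap is strictly cheaper than $s_2$, contradicting that $s_2$ is in the support and that the equilibrium cost is constant on the support. Thus the support is a single interval unbounded to the right, say $[t_a,\infty)$ (left-closedness again follows from continuity of $F$ and the closed-union-of-intervals assumption).

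It remains to show $t_a$ is finite and strictly negative. Finiteness: if the support were all of $\mathbb{R}$, then letting $t\to-\infty$ we would have $F(t)\to 0$ and $\mathbb{E}Q(t)\to 0$, so $c(t)=-\alpha t+o(1)\to\infty$; but the equilibrium cost is the finite constant $c_e\le\gamma$ from Lemma \ref{Lemma_index_no_tb}, a contradiction, so $t_a>-\infty$. Negativity: suppose $t_a\ge 0$. Since no customer arrives before $t_a$, the queue is empty at $t_a$ and $F(t_a)=0$, so $c(t_a)=\beta t_a\cdot\mathbbm{1}_{\{t_a\ge0\}}$; in the index model $\beta=0$, giving $c_e=c(t_a)=0$. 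But then every customer incurs zero expected cost, which is impossible once $t_a\ge0$: a customer arriving at $t_a$ faces a positive probability of finding the queue nonempty at some later point of the support (other customers do arrive with positive probability in $[t_a,\infty)$), so its expected cost is strictly positive — contradiction. Hence $t_a<0$.

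The main obstacle I anticipate is the gap-exclusion step: making the comparison of $c(s_1),c(s_2),c(s_2+)$ rigorous requires handling the left-continuity/right-continuity behaviour of $\mathbb{E}Q$ carefully (using the monotonicity facts cited from Lemma 2 of \cite{JS2012}, namely that $\mathbb{E}Q$ has only upward jumps and decreases in the absence of arrivals), and one must be slightly careful that a ``gap'' could in principle accumulate against $t_a$ from the right. Everything else is a short deduction from Lemmata \ref{Lemma_index_no_tb} and \ref{Lemma_no_holes} plus the admissibility restriction on $F$.
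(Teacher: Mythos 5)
Your proposal is correct and follows essentially the same route as the paper, which presents the corollary as an immediate consequence of Lemmata \ref{Lemma_index_no_tb} and \ref{Lemma_no_holes} together with the one-line observation that $t_a>-\infty$ because $-\alpha t\to\infty$ as $t\to-\infty$ while the equilibrium cost is at most $\gamma$. You are in fact somewhat more careful than the paper: you supply an explicit gap-exclusion argument (the cost is strictly decreasing across any interior gap, contradicting constancy of $c$ at the gap's two support endpoints) and an explicit proof that $t_a<0$, both of which the paper leaves implicit, the former by reading Lemma \ref{Lemma_no_holes} as excluding gaps even though its formal statement only excludes atoms.
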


Note that $t_a>-\infty$ because $\lim_{t\rightarrow -\infty}c(t)=\infty$ for any $F$, and in Lemma \ref{Lemma_index_no_tb} we established that the equilibrium cost is at most $\gamma$.

\subsection{Related literature}\label{sec_literature}
In 1969 Vickrey published his seminal paper \textit{"Congestion Theory and Transport Investment"} \cite{V1969}, which presented a fluid model for congestion dynamics. In particular, Nash equilibrium arrival dynamics where characterized for a bottleneck model. This model was studied and developed in various directions in the following decades, among many others by Arnott, de Palma and Lindsey \cite{ADL1993}, Verhoef \cite{V2003} and Otsubo and Rapoport \cite{OR2008}. The latter is the most closely related to this work because they assume a discrete (non-fluid) number of customers. However, they consider a discrete action space, i.e. the arrivals take place at predefined unit intervals. 

Most of the research on the strategic behaviour in queues, deals primarily with models with a discrete action space, such as whether to join a queue or not or choosing between several queues. Furthermore, the analysis is often focused on the steady state equilibrium properties. The book by Hassin and Haviv \cite{HH2003} presents many results in this field and surveys the existing literature. As mentioned above, the first to examine a model where customers choose their arrival time to a queue were Glazer and Hassin in \cite{GH1983}. In \cite{GH1987} they extended this analysis to a server with bulk service, for example a bus that can carry many customers at once. The main technical differences in this kind of analysis are the continuous action space, and the fact that the analysis requires transient analysis, rather than stationary. In recent years there have been several developments in the research of these models. Among them are the above mentioned works by Haviv \cite{H2013} and Juneja and Shinkin \cite{JS2012}. In the fluid setting, models with non-homogeneous customer types were studied by Jain, Juneja and Shimkin in \cite{JJS2011} and Honnappa and Jain in \cite{HJ2010} for a queueing network. An additional fluid model with a random volume of arrivals was analysed in \cite{JRS2012} by Juneja, Raheja and Shimkin. Hassin and Kleiner \cite{HK2010} studied a model with opening and closing times, and no tardiness cost. In \cite{HR2013}, Haviv and Ravner examine a multi-server system with no queue buffer, where customers are interested in maximizing the probability of obtaining service. Honnappa, Jain and Ward characterize the general properties of a single server queue with individual customer arrivals in \cite{HJW2012}, along with finding the transient fluid and diffusion limits of the process. In \cite{HKK2010}, Haviv, Kella and Kerner analyse a model where customers observe their order of arrival, but the decision variable is still binary: join or balk. Our aim is to add to this body of work by introducing a new cost structure which takes into account the order of the arrivals. 

\section{Two customer arrival game}\label{sec_two_customer}
Let $N=1$, which means each customer knows there is only one other customer. We will present explicit equilibrium solutions for several scenarios and compare them to the known result of the tardiness cost model.

\subsection{Tardiness cost}\label{sec_two_tardiness}
Suppose that $\gamma=0$ and $\beta>0$. In \cite{JS2012} the two customer case was explicitly solved. They showed that the symmetric equilibrium arrival distribution $F$ with density $f$ which is uniform before time zero linearly decreasing after:
\begin{equation} \label{JS_equil_f}
f(t) =
\left\{
	\begin{array}{ll}
		\mu \frac{\alpha}{\alpha+\beta} \mbox{, } &  t \in [t_a,0) \\
		-\mu \frac{\beta}{\alpha+\beta} -\frac{\mu^2}{\alpha+\beta}(\beta t+\alpha t_a)\mbox{, } &  t \in [0,t_b]  \\
		0\mbox{, } &  o.w.
	\end{array}
\right. ,
\end{equation}
where
\begin{equation}
-t_a=\frac{1}{\mu} \sqrt{\frac{\beta}{\alpha}\left(2+\frac{\beta}{\alpha}\right)}
\end{equation}
and
\begin{equation}
t_b=\frac{1}{\mu} \left( \sqrt{1+\frac{2\alpha}{\beta}}-1\right)
\end{equation}
define the support of $F$. Note that the density $f$ has a a downwards discontinuity at time zero. Finally, the expected cost for each customer in equilibrium equals $-\alpha t_a$. \\

\begin{figure}[h!]
\centering
\begin{tikzpicture}[xscale=5,yscale=0.85]
  \def\xmin{-0.5}
  \def\xmax{1}
  \def\ymin{0}
  \def\ymax{2.5}
    \draw[->] (\xmin,\ymin) -- (\xmax,\ymin) node[right] {$t$} ;
    \draw[->] (0,\ymin) -- (0,\ymax) node[above] {$f(t)$} ;
    \foreach \x in {-0.5,-0.29,0,0.55,1}
    \node at (\x,\ymin) [below] {\x};
    \foreach \y in {1,2}
    \node at (0,\y) [left] {\y};
    \draw[red, ultra thick, domain=-0.5:-0.294] plot (\x, {0});
	\draw[red, ultra thick, domain=-0.294:0] plot (\x, {2.25});
	\draw[red, ultra thick, domain=0:0.549] plot (\x, {1.2343	-2.25*\x});
	\draw[red, ultra thick, domain=0.54986:1] plot (\x,{0});
	\draw[red,-, densely dashed] (0,1.2343) -- (0, 2.25);
	\draw[red,-, densely dashed] (-0.294,0) -- (-0.294, 2.25);
\end{tikzpicture}
\caption{Equilibrium arrival density - tardiness cost \ $\left(\mu=3, \ \alpha=6, \ \beta=2\right)$} 
\end{figure}
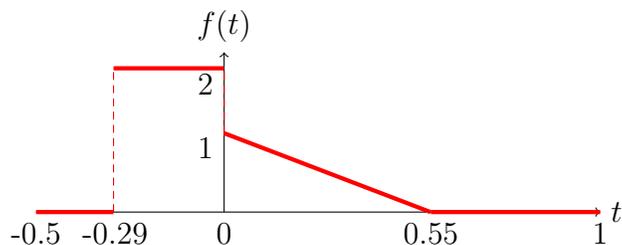

\subsection{Index cost}\label{sec_two_index}
Now we consider the case where the waiting cost is the same as above, but instead of the tardiness cost a customer incurs a cost of $\gamma$ if the other customer has arrived before him. From Corollary \ref{Corollary_index_interval} we know that any equilibrium arrival distribution in this case is given by a continuous cdf $F$ defined on a support interval $[t_a,\infty)$, where $-\infty<t_a<0$. We next show that the equilibrium arrival density is again uniform before time zero, but after time zero it is exponential. 

\begin{theorem}
The following density defines a unique symmetric equilibrium arrival distribution:
\begin{equation} \label{Index_equil_f}
f(t) =
\left\{
	\begin{array}{ll}
		 \frac{\alpha}{\gamma+\frac{\alpha}{\mu}} & \mbox{, } t \in \left[-\frac{\gamma}{\alpha},0 \right] \\
		\frac{\mu}{\left(1+\frac{\alpha}{\gamma \mu}\right)\left(1+\frac{\gamma \mu}{\alpha}\right)} e^{-\frac{\mu}{1+\frac{\alpha}{\gamma \mu}}t} & \mbox{, } t \in (0,\infty )  \\
		0 & \mbox{, } o.w.
	\end{array}
\right.
.
\end{equation}
The expected cost for each customer in this equilibrium is $c_e=\gamma$.
\end{theorem}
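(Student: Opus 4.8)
The plan is to use the standard equilibrium principle for these arrival games: on the support of $F$ the expected cost $c(t)$ must be constant and equal to the equilibrium cost $c_e$, while outside the support it must be no smaller. By Corollary \ref{Corollary_index_interval} the support is an interval $[t_a,\infty)$ with $t_a<0$ and $F$ continuous, so I would split the analysis at the server opening time $0$ and treat the two regimes $[t_a,0)$ and $(0,\infty)$ separately.

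First I would compute the two ingredients of \eqref{General_index_cost} for $N=1$. Since only one rival arrives, $\mathbbm{E}A(t)=F(t)$, and the queue faced by the tagged customer takes values in $\{0,1\}$, so $\mathbbm{E}Q(t)=\mathbbm{P}(\text{the rival is present at } t)$. For $t<0$ the server is closed, hence $\mathbbm{E}Q(t)=F(t)$ and $c(t)=-\alpha t+\left(\tfrac{\alpha}{\mu}+\gamma\right)F(t)$. For $t\ge 0$ the rival departs at rate $\mu$ whenever present, which yields the flow relation $\frac{d}{dt}\mathbbm{E}Q(t)=f(t)-\mu\,\mathbbm{E}Q(t)$ --- the cleanest route to the ODE, and the step I expect to be the technical heart of the argument.

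Next I would pin down $c_e$. Letting $t\to\infty$ in $c(t)=\tfrac{\alpha}{\mu}\mathbbm{E}Q(t)+\gamma F(t)$ and using $\mathbbm{E}Q(t)\to 0$, $F(t)\to 1$ gives $c_e=\gamma$, consistent with the upper bound of Lemma \ref{Lemma_index_no_tb}. On $[t_a,0)$, differentiating $c(t)=\gamma$ gives the constant (uniform) density $f(t)=\alpha/(\gamma+\tfrac{\alpha}{\mu})$, and evaluating $c(t_a)=-\alpha t_a=\gamma$ fixes $t_a=-\gamma/\alpha$; this also determines $F(0)$. On $(0,\infty)$, substituting the flow relation into the differentiated equilibrium condition $\tfrac{\alpha}{\mu}\frac{d}{dt}\mathbbm{E}Q(t)+\gamma f(t)=0$ and eliminating $\mathbbm{E}Q(t)$ via $\tfrac{\alpha}{\mu}\mathbbm{E}Q(t)=\gamma(1-F(t))$ yields the linear ODE $f(t)=\lambda\,(1-F(t))$ with $\lambda=\mu/(1+\tfrac{\alpha}{\gamma\mu})$, whose solution is exponential with the stated rate; matching $1-F(0)$ from the uniform phase then gives the claimed coefficient.

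Finally I would confirm that the constructed $F$ is a genuine and unique symmetric equilibrium: check that it is a bona fide cdf (continuity of $F$ at $0$, total mass one, noting the density itself jumps down there), that for $t<t_a$ one has $c(t)=-\alpha t>\gamma=c_e$ so no earlier deviation is profitable, and that no deviation inside $(t_a,\infty)$ helps since $c\equiv\gamma$ there. Uniqueness follows because the equilibrium conditions reduce to first-order ODEs with fixed boundary data whose solutions are unique; combined with the uniqueness of the symmetric profile established in \cite{JS2012}, this yields the result. The main obstacle is the correct transient description of $\mathbbm{E}Q(t)$ and the resulting ODE on $(0,\infty)$; everything else is boundary matching and verification.
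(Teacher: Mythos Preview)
Your proposal is correct and follows essentially the same route as the paper. The only cosmetic difference is that the paper sets up the three-state Markov chain $\{(0,0),(0,1),(1,1)\}$ and then collapses it to the single equation $p_{1,1}'(t)=g(t)-\mu\,p_{1,1}(t)$, whereas you write the flow balance $\tfrac{d}{dt}\mathbbm{E}Q(t)=f(t)-\mu\,\mathbbm{E}Q(t)$ directly; since $\mathbbm{E}Q(t)=p_{1,1}(t)$ these are the same equation, and from there both arguments derive $c_e=\gamma$ by letting $t\to\infty$, fix $t_a$ and $F(0)$, and solve the resulting linear ODE for the exponential tail.
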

\begin{proof}
Suppose, without loss of generality, that the first customer arrives according to some distribution with cdf $G$ and density $g$. The following analysis establishes conditions such that playing $G$ is a best response for the second customer, i.e. that there exists some constant $c_e$ such that this is her expected cost on the support of $G$, and at least $c_e$ outside of the support. We will show that the distribution defined by the density $f$ in $\eqref{Index_equil_f}$ is the unique solution that satisfies these conditions.

No service is given before the opening, so all customers who have arrived are still in the queue: $\mathbbm{E}Q(t)=\mathbbm{E}A(t)=G(t), \ \forall t<0$. Therefore, the cost of arriving before the opening, at some time $t<0$ is:
\begin{equation}
c(t)=-\alpha  t +\frac{\alpha}{\mu} G(t)+\gamma G(t).
\end{equation}
The equilibrium condition for $t<0$ is now: 
\begin{equation}
c_e= -\alpha  t +\frac{\alpha}{\mu} G(t)+\gamma G(t).
\end{equation}
By taking derivative we obtain the first part of \eqref{Index_equil_f}, which means that any equilibrium density $g$ satisfies $g(t)=f(t),\ \forall t<0$.

After time zero, the server commences operation and the system can be presented as a non-homogeneous in time Markov process with the following three states: $\left\lbrace (0,0),(0,1),(1,1)\right\rbrace$, where $(i,j)$ is the state that there are $i$ customers in the system and $j$ have already arrived. We denote the probability of state $(i,j)$ by $p_{ij}:=\mathbbm{P}(Q(t)=i,A(t)=j)$. The dynamics of this process satisfy the following set of differential equations:
\begin{eqnarray}
p^{'}_{0,0}(t) &=& -\frac{g(t)}{1-G(t)}p_{0,0}(t) \label{Two_dynamics1} \\
p^{'}_{0,1}(t) &=& \mu p_{1,1}(t) \label{Two_dynamics2} \\
p^{'}_{1,1}(t) &=& \frac{g(t)}{1-G(t)}p_{0,0}(t)-\mu p_{1,1}(t) \label{Two_dynamics3}.
\end{eqnarray}
Note that the the expected queue size is exactly $p_{1,1}(t)$ in this case, and so the equilibrium condition can be written as follows:
\begin{equation} \label{Index_Condition}
c_e=\frac{\alpha}{\mu}p_{1,1}(t) +\gamma G(t),
\end{equation}
and by taking derivatives:
\begin{equation}
p^{'}_{1,1}(t)=-\frac{\gamma \mu}{\alpha} g(t).
\end{equation}
From \eqref{Two_dynamics3} we also have:
\begin{equation}
p^{'}_{1,1}(t) = \frac{g(t)}{1-G(t)}p_{0,0}(t)-\mu p_{1,1}(t) = g(t)-\mu p_{1,1}(t).
\end{equation}

The second equation is due to the fact that the probability of the state $(0,0)$ is the probability that the other customer has not arrived yet, namely $1-G(t)$. And by combining the last two equations we get:
\begin{equation}
p_{1,1}(t)=g(t) \left(\frac{1}{\mu}+\frac{\gamma}{\alpha}\right).
\end{equation}
We have thus far established that $G$ is the solution to the following differential equation:
\begin{equation} \label{Index_diff_eq}
c_e=\frac{\alpha}{\mu}g(t) \left(\frac{1}{\mu}+\frac{\gamma}{\alpha}\right) +\gamma G(t) \ , \ t\geq 0.
\end{equation}
In order to solve this equation explicitly we need to obtain $c_e$ and $G(0)$. We argue that if $G$ is a solution to \eqref{Index_diff_eq} on a support of $[0,\infty)$ then $c_e=\gamma$. This is because if we take a limit to infinity, then $g(t)$ approaches zero and $G(t)$ approaches one, and so the cost approaches $\gamma$. But for this to be equilibrium the cost throughout the support must equal $\gamma$. This implies that the beginning of the negative interval is $t_a=-\frac{\gamma}{\alpha}$ and that $G(0)=\frac{\gamma}{\gamma+\frac{\alpha}{\mu}}$. We complete the proof by stating that the density function $f$ in the positive part of \eqref{Index_equil_f} is the unique solution to the linear differential equation defined by all of the above:
\begin{equation} \label{Index_diff_eq2}
g(t) \left(\frac{\alpha}{\gamma\mu^2}+\frac{1}{\mu}\right)= (1-G(t)) \ , \ G(0)=\frac{1}{1+\frac{\alpha}{\gamma \mu}}.
\end{equation}
\qed
\end{proof}

\begin{remark}[Remark 1]
Note that also in this case, $f$ has a downward discontinuity at zero. This can be argued by a few algebraic steps showing that the following two inequalities are equivalent:
\begin{equation}
\frac{\alpha}{\gamma+\frac{\alpha}{\mu}}>
\frac{\mu}{\left(1+\frac{\alpha}{\gamma\mu}\right)\left(1+\frac{\gamma \mu}{\alpha}\right)}, 
\end{equation}
and
\begin{equation}
\frac{\alpha}{\gamma \mu}>0.
\end{equation}
\end{remark}

\begin{remark}[Remark 2]
The hazard rate of the equilibrium arrival distribution for $t>0$ is $h(t)=\frac{\mu}{1+\frac{\alpha}{\gamma\mu}}$. This constant will appear again in the numerical analysis part of section \ref{sec_general}, where we will show that although the general equilibrium arrival distribution for any number of customers is not exponential, the hazard rate approaches this very same rate (which is independent of the number of customers).
\end{remark}

\begin{figure}[h!]
\centering
\begin{tikzpicture}[xscale=2.8,yscale=1.45]
  \def\xmin{-1}
  \def\xmax{2.5}
  \def\ymin{0}
  \def\ymax{2.4}
    \draw[->] (\xmin,\ymin) -- (\xmax,\ymin) node[right] {$t$} ;
    \draw[->] (0,\ymin) -- (0,\ymax) node[above] {$f(t)$} ;
    \foreach \x in {-1,-0.33,0,1,2}
    \node at (\x,\ymin) [below] {\x};
    \foreach \y in {1,2}
    \node at (0,\y+0.1) [left] {\y};
    \draw[red, ultra thick, domain=-1:-0.333] plot (\x, {0});
	\draw[red, ultra thick, domain=-0.333:0] plot (\x, {2});
	\draw[red, ultra thick, domain=0:2.5] plot (\x, {0.666*exp(-\x)});
	\draw[red,-, densely dashed] (0,0.666) -- (0, 2);
	\draw[red,-, densely dashed] (-0.333,0) -- (-0.333, 2);
\end{tikzpicture}
\caption{Equilibrium arrival density - Index Cost \ $\left(\mu=3, \ \alpha=6, \ \gamma=1\right)$} 
\end{figure}
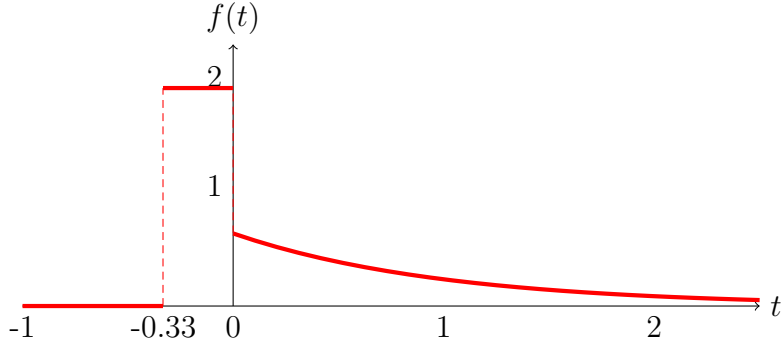

\subsection{Index cost - no early birds}\label{sec_two_index_noearly}
Suppose now that there is no option to queue before the server commences operation. We assume the same index cost function as in the previous subsection. First we argue that in a symmetric equilibrium, customers arrive at time zero with some positive probability, denoted by $p_0>0$. For if this was not the case, then any customer could reduce her own cost to zero by arriving at time zero with probability $1$. Recall that if both customers arrive at the same time then they are admitted into service in a random order. Given the equilibrium probability $p_0$, the cost of arriving at time zero is:
\begin{equation} \label{two_no_early}
c(0)=\frac{p_0}{2}\left(\frac{\alpha}{\mu}+\gamma\right).
\end{equation}
As in the previous subsection, any customer can guarantee a cost as close as she desires to $\gamma$ by arriving late enough. Therefore, if $p_0<1$ then we have $c(0)=\gamma$. This is the outcome when the cost of waiting for the service completion of the other customer is higher than the cost of arriving last: $\frac{\alpha}{\mu}>\gamma$. Otherwise, both customers can ensure a cost that is lower than $\gamma$ (but higher than $\frac{\gamma}{2}$) by arriving at time zero with probability $1$. The conclusion from the above is that in equilibrium the probability to arrive at time zero is:
\begin{equation}\label{two_p0}
p_0 =
\left\{
	\begin{array}{ll}
		 \frac{2\gamma}{\gamma+\frac{\alpha}{\mu}} & \mbox{, } \frac{\alpha}{\mu}>\gamma  \\
		1 & \mbox{, }  \frac{\alpha}{\mu}\leq\gamma
	\end{array}
\right. .
\end{equation}
The corresponding equilibrium costs are $\gamma$ and $\frac{1}{2}\left(\frac{\alpha}{\mu}+\gamma\right)$, respectively. \\
Next we seek the arrival distribution after time zero for the case where $\frac{\alpha}{\mu}>\gamma$. If the other customer has arrived at time zero, then the probability that he is still in service at time $t$ is $e^{-\mu t}$. Therefore, if the arrival probability at zero is $p_0$ and $0$ on the interval $(0,t]$, then the expected cost at time $t$ is:
\begin{equation}\label{two_no_early_empty_cost}
c(t)= \frac{2\gamma}{\gamma+\frac{\alpha}{\mu}}\left(\gamma+e^{-\mu t}\frac{\alpha}{\mu}\right).
\end{equation}
This is a decreasing function of $t$ that initiates at $c(0)=2\gamma$. Therefore, there exists some $t^e$ such that arriving anywhere in $(0,t^e)$ costs more than the equilibrium cost of $\gamma$.  Thus, in equilibrium $f(t)=0$ for all $t\in (0,t^e)$. By comparing \eqref{two_no_early_empty_cost} to $\gamma$ we conclude that $t^e=-\frac{1}{\mu}\log\left(\frac{1}{2}\left(1-\frac{\gamma\mu}{\alpha}\right)\right)$. Finally, we are left with computing $f(t)$ for all $t\geq t^e$. The system dynamics and the equilibrium conditions are identical to the ones presented in the early bird case with the initial condition $F(t^e)=p_0$ \footnote{It makes no difference for the analysis if the customers have all arrived at an atom or uniformly on an interval, as long as the initial conditions are the same.}. Therefore, the density for $t>t^e$ is: $f(t) =	(1-p_0)\frac{\mu}{2} p_0 e^{-\frac{\mu}{2} p_0(t-t_e)}$. The following theorem summarizes the results of this subsection.

\begin{theorem}
For the two customer game with no early birds the symmetric equilibrium arrival distribution is given by:
\begin{itemize}
\item[1.] If $\frac{\alpha}{\mu}\leq\gamma$ then $p_0=1$, i.e. both customers arrive at time zero and are admitted into service in random order. The expected cost for each customer is $\frac{1}{2}\left(\frac{\alpha}{\mu}+\gamma\right)$.
\item[2.] If $\frac{\alpha}{\mu}>\gamma$ then $p_0=\frac{2\gamma}{\gamma+\frac{\alpha}{\mu}}$ and: 
\begin{equation}
f(t)=(1-p_0)\frac{\mu}{2} p_0 e^{-\frac{\mu}{2} p_0(t-t_e)}\mathbbm{1}_{\lbrace t\geq t^e\rbrace},
\end{equation} 
where $t^e=-\frac{1}{\mu}\log\left(\frac{1}{2}\left(1-\frac{\gamma\mu}{\alpha}\right)\right)$. The expected cost for each customer is $\gamma$.
\end{itemize} 
\end{theorem}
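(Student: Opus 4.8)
The plan is to treat the statement as the synthesis of two separate determinations: first fixing the atomic mass $p_0$ at the opening and the equilibrium cost $c_e$, and then, in the regime where a continuous part exists, solving for the post-gap density. Throughout I would use the fact already exploited in Lemma~\ref{Lemma_index_no_tb}: by arriving at an arbitrarily large $t$ a customer makes $\mathbbm{E}Q(t)\to 0$ while $\mathbbm{E}A(t)\to 1$, so she can secure a cost arbitrarily close to $\gamma$, whence the equilibrium cost always satisfies $c_e\le\gamma$. Since early birds are forbidden, the action space is $[0,\infty)$ and it suffices to rule out profitable deviations there.

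First I would pin down $p_0$. As in the text, $p_0>0$ in any symmetric equilibrium, for otherwise a customer could arrive alone at time $0$ at zero cost. Conditioning on the random tie-break gives $c(0)=\tfrac{p_0}{2}\left(\tfrac{\alpha}{\mu}+\gamma\right)$. If $\tfrac{\alpha}{\mu}\le\gamma$, I claim the pure atom $p_0=1$ is the equilibrium: then $c(0)=\tfrac12\left(\tfrac{\alpha}{\mu}+\gamma\right)\le\gamma$, while any deviation to $t>0$ costs $\gamma+\tfrac{\alpha}{\mu}e^{-\mu t}>\gamma\ge c(0)$, so no deviation helps; moreover a putative $p_0<1$ would force $c_e=\gamma$ (by the tail argument applied to the continuous part) and hence $p_0=\tfrac{2\gamma}{\gamma+\alpha/\mu}\ge1$, a contradiction. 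If instead $\tfrac{\alpha}{\mu}>\gamma$, the atom-only profile is not an equilibrium because $c(0)=\tfrac12(\tfrac{\alpha}{\mu}+\gamma)>\gamma$ is beaten by a late arrival; mass must therefore extend past $0$, forcing $c_e=\gamma$, and solving $\tfrac{p_0}{2}(\tfrac{\alpha}{\mu}+\gamma)=\gamma$ yields $p_0=\tfrac{2\gamma}{\gamma+\alpha/\mu}\in(0,1)$. This settles part~1 and the value of $p_0$ in part~2.

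For the density in part~2 I would first locate the gap. Given the atom $p_0$ at $0$ and no mass on $(0,t]$, a customer arriving at $t$ pays $c(t)=p_0\bigl(\gamma+\tfrac{\alpha}{\mu}e^{-\mu t}\bigr)$, which decreases from $2\gamma$ at $0^+$ toward $p_0\gamma<\gamma$; it first reaches the equilibrium level $\gamma$ at $t^e=-\tfrac1\mu\log\!\left(\tfrac12\bigl(1-\tfrac{\gamma\mu}{\alpha}\bigr)\right)$, so $c(t)>\gamma$ on $(0,t^e)$ and no equilibrium mass can sit there. On $[t^e,\infty)$ I would reuse the three-state Markov description $\{(0,0),(0,1),(1,1)\}$ and its dynamics \eqref{Two_dynamics1}--\eqref{Two_dynamics3} from the early-bird analysis of Section~\ref{sec_two_index}: imposing $c_e=\gamma$, differentiating, using $p_{0,0}(t)=1-G(t)$ to get $p_{1,1}'=g-\mu p_{1,1}$, and eliminating $p_{1,1}$ reduces everything to the linear equation $g(t)=\tfrac{\mu p_0}{2}\bigl(1-G(t)\bigr)$ with $G(t^e)=p_0$, whose unique solution is exactly the claimed $f$. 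A mass check ($p_0$ at the atom plus $\int_{t^e}^\infty f=1-p_0$) confirms a proper distribution, and the constant cost $c_e=\gamma$ gives the stated value.

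The step I expect to carry the real content is the global consistency at $t^e$. The endpoint $t^e$ is defined purely by the cost of the atom-only configuration dropping to $\gamma$, whereas the post-gap differential equation requires a specific initial value of the in-service probability $p_{1,1}(t^e)$ for the identity $c_e=\gamma$ to persist. One must check these agree, i.e. that $p_0 e^{-\mu t^e}$ (the chance the time-$0$ arrival is still in service at $t^e$) equals $\tfrac{\gamma\mu}{\alpha}(1-p_0)$ (the value dictated by the equilibrium condition at $t^e$); a direct computation shows both equal $\tfrac{\gamma\mu(\alpha-\gamma\mu)}{\alpha(\alpha+\gamma\mu)}$. This reconciliation, together with the footnote observation that only the initial datum $G(t^e)=p_0$ and not the microscopic history on $[0,t^e]$ enters the dynamics, is what makes the concatenated profile (atom at $0$, empty gap, exponential tail) a genuine symmetric equilibrium rather than a merely piecewise solution.
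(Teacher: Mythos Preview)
Your proposal is correct and follows essentially the same route as the paper: fix $p_0$ from the tie-break cost at zero together with the tail bound $c_e\le\gamma$, locate the gap $(0,t^e)$ by letting the atom-only cost decay to $\gamma$, and then solve the same linear ODE on $[t^e,\infty)$ with initial condition $G(t^e)=p_0$. Your added ``global consistency'' check at $t^e$ is valid but not an independent condition: the equality $p_0 e^{-\mu t^e}=\tfrac{\gamma\mu}{\alpha}(1-p_0)$ is precisely the defining equation $p_0\gamma+\tfrac{\alpha}{\mu}p_0 e^{-\mu t^e}=\gamma$ for $t^e$, so it holds automatically rather than being a separate hurdle the construction must clear.
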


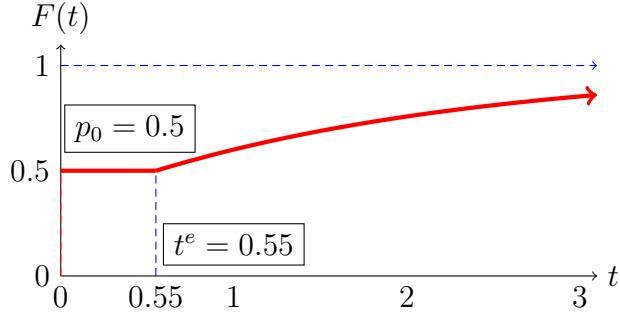
\begin{figure}[h]
\centering
\begin{tikzpicture}[xscale=2.3,yscale=2.8]
  \def\xmin{0}
  \def\xmax{3.1}
  \def\ymin{0}
  \def\ymax{1.1}
    \draw[->] (\xmin,\ymin) -- (\xmax,\ymin) node[right] {$t$} ;
    \draw[->] (0,\ymin) -- (0,\ymax) node[above] {$F(t)$} ;
    \foreach \x in {0,0.55,1,2,3}
    \node at (\x,\ymin) [below] {\x};
    \foreach \y in {0,0.5,1}
    \node at (0,\y) [left] {\y};
    \draw[red, ultra thick, domain=0:0.55] plot (\x, {0.5});
	\draw[red,->, ultra thick, domain=0.55:3.1] plot (\x, {1-0.5*exp(-0.5*(\x-0.55))});
	\draw[blue,-, densely dashed] (0.55,0) -- (0.55,0.5);
	\draw[red,-, densely dashed] (0,0) -- (0,0.5);
	\draw[blue,->, densely dashed] (0,1) -- (3.1,1);
	\node[draw] at (0.4,0.7) {$p_0=0.5$};
	\node[draw] at (1,0.15) {$t^e=0.55$};
\end{tikzpicture}
\caption{Equilibrium arrival \textit{cdf} - index cost and no early birds \ $\left(\mu=2, \ \alpha=6, \ \gamma=1\right)$} 
\end{figure}

\begin{remark}[Remark 1]
If $\frac{\alpha}{\mu}<\gamma$ then $\frac{1}{2}\left(\frac{\alpha}{\mu}+\gamma\right)<\gamma$, which means that the equilibrium cost of both customers decreased when the opportunity to arrive early was taken away. Overall, we have shown that not allowing early birds increases social welfare.
\end{remark}

\begin{remark}[Remark 2]
We overlook the possible existence of non symmetric equilibria of the form: customer $1$ arrives at zero and customer $2$ arrives at some time $T$ such that her cost is lower than $\frac{1}{2}\left(\frac{\alpha}{\mu}+\gamma\right)$. If such an equilibrium exists, then by swapping the customer's roles we get an additional equilibrium, and generally there may be multiple equilibria of this form.
\end{remark}

\subsection{Index cost - finite closing time}\label{sec_two_index_closing}
Suppose again that early birds are allowed, but entry to the system is only possible up until some time $T<\infty$. Service is provided to all who arrived prior to time $T$ even if it is completed after the closing time. The arrival density prior to time zero is as in the no closing time case: $f(t)=\frac{\alpha}{\gamma+\frac{\alpha}{\mu}}$. Customers can no longer ensure a cost of no more than $\gamma$ by arriving very late. Moreover, the equilibrium cost is strictly higher than $\gamma$, because the cost at closing time is $c(T)=\gamma+\mathbbm{P}(Q(t)=1)\frac{\alpha}{\mu}$. Thus, we can conclude that arrivals start at an earlier time than in the previous case: $t_a<-\frac{\gamma}{\alpha}$ (as before, the equilibrium cost is still $-t_a\alpha$). Given $t_a$ we can compute $F(0)= \frac{-t_a \alpha}{\gamma+\frac{\alpha}{\mu}}$, and the dynamics after time zero are as before. From \eqref{Index_diff_eq} we derive the equilibrium equation: 
\begin{equation}\label{Index_diff_eq3}
-t_a \alpha=\frac{\alpha}{\mu}f(t) \left(\frac{1}{\mu}+\frac{\gamma}{\alpha}\right) +\gamma F(t), \ 0\leq t <T.
\end{equation}
Coupled with the initial condition $F(0)$, the solution of this equation is:
\begin{equation}
F(t)=-t_a \frac{\alpha}{\gamma}\left(1-\frac{e^{-\frac{\mu t}{1+\frac{\alpha}{\gamma\mu}}}}{1+\frac{\gamma\mu}{\alpha}}\right).
\end{equation}
By taking $F(T)=1$ we can explicitly derive: $t_a=-\frac{\gamma}{\alpha}\left(1-\frac{e^{-\frac{\mu T}{1+\frac{\alpha}{\gamma \mu}}}}{1+\frac{\gamma\mu}{\alpha}}\right)^{-1}$. Recall that the equilibrium cost is $-t_a\alpha$, thus the cost is a decreasing function of $T$ that approaches $\gamma$, i.e. the cost with no closing time. We can now plug $t_a$ in $F(0)$ and $F(t)$, for $0\leq t\leq T$:
\begin{equation}
F(t)=\frac{\left(1-\frac{e^{-\frac{\mu t}{1+\frac{\alpha}{\gamma \mu}}}}{1+\frac{\gamma\mu}{\alpha}}\right)}{\left(1-\frac{e^{-\frac{\mu T}{1+\frac{\alpha}{\gamma \mu}}}}{1+\frac{\gamma\mu}{\alpha}}\right)}, \ 0\leq t <T \ , \ F(0)=\frac{\left(1-\frac{e^{-\frac{\mu T}{1+\frac{\alpha}{\gamma\mu}}}}{1+\frac{\gamma\mu}{\alpha}}\right)}{1+\frac{\alpha}{\gamma\mu}}.
\end{equation}
We summarize the above analysis in the next theorem.
\begin{theorem}
The equilibrium arrival distribution for the two customer game with closing time $T$ is defined by the following density:
\begin{equation} \label{Index_equil_T_f}
f(t) =
\left\{
	\begin{array}{ll}
		 \frac{\alpha}{\gamma+\frac{\alpha}{\mu}} & \mbox{, } t \in \left[-\frac{\gamma}{\alpha}\left(1-\frac{e^{-\frac{\mu T}{1+\frac{\alpha}{\gamma\mu}}}}{1+\frac{\gamma\mu}{\alpha}}\right)^{-1},0 \right) \\
		\frac{\mu}{\left(1+\frac{\alpha}{\gamma\mu}\right)\left(1+\frac{\gamma\mu}{\alpha}-e^{-\frac{\mu T}{1+\frac{\alpha}{\gamma\mu}}}\right)}e^{-\frac{\mu t}{1+\frac{\alpha}{\gamma\mu}}} & \mbox{, } t \in [	0,T]  \\
		0 & \mbox{, } o.w.
	\end{array}
\right. ,
\end{equation}
and the equilibrium cost of every customer is $\gamma\left(1-\frac{e^{-\frac{\mu T}{1+\frac{\alpha}{\gamma\mu}}}}{1+\frac{\gamma\mu}{\alpha}}\right)^{-1}$.
\end{theorem}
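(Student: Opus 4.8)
The plan is to follow the same two-phase structure used in the no-closing-time theorem, isolating the one genuinely new ingredient: the closing time forces the right endpoint of the support to be exactly $T$, which replaces the ``arrive arbitrarily late'' escape of Lemma \ref{Lemma_index_no_tb} and pins down the two free constants $t_a$ and $c_e$. First I would observe that the pre-opening phase is unchanged from the $T=\infty$ case. For $t<0$ no service has been rendered, so $\mathbbm{E}Q(t)=\mathbbm{E}A(t)=F(t)$ exactly as before, the cost is $c(t)=-\alpha t+\left(\tfrac{\alpha}{\mu}+\gamma\right)F(t)$, and Corollary \ref{Corollary_index_interval} together with Lemma \ref{Lemma_no_holes} guarantees a continuous $F$ supported on $[t_a,T]$ with $t_a<0$. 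Differentiating the equilibrium identity $c_e=-\alpha t+\left(\tfrac{\alpha}{\mu}+\gamma\right)F(t)$ gives the uniform density $\tfrac{\alpha}{\gamma+\alpha/\mu}$ on $[t_a,0)$; evaluating at $t=t_a$ (where $F(t_a)=0$) yields $c_e=-\alpha t_a$, and integrating the uniform density up to $0$ gives $F(0)=\tfrac{-\alpha t_a}{\gamma+\alpha/\mu}$. These two relations leave $t_a$ as the single remaining unknown.

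Next I would reuse the post-opening dynamics verbatim. The three-state Markov description and the balance equations \eqref{Two_dynamics1}--\eqref{Two_dynamics3} are insensitive to the closing time, so the equilibrium condition \eqref{Index_Condition} and the elimination of $p_{1,1}$ and $p_{0,0}$ that leads to \eqref{Index_diff_eq} hold unchanged, except that the constant cost is now $c_e=-\alpha t_a$ rather than $\gamma$. This is precisely equation \eqref{Index_diff_eq3}. Solving this first-order linear ODE with constant coefficients on $[0,T]$, subject to the initial value $F(0)$ found above, is routine and produces $F$ as an explicit constant-plus-exponential function of $t$ in which $t_a$ still appears as a parameter.

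The crux, and the step I expect to be the main obstacle, is to fix $t_a$ by showing that the support reaches all the way to $T$, i.e. $F(T)=1$. The late-arrival argument of Lemma \ref{Lemma_index_no_tb} is no longer available once entry is forbidden after $T$, so I would instead rule out a right endpoint $t_b<T$ directly. If $F(t_b)=1$ with $t_b<T$, then on $(t_b,T]$ no further arrivals occur while the server keeps working, so $\mathbbm{E}Q(t)$ is strictly decreasing there; hence $c(t)=\tfrac{\alpha}{\mu}\mathbbm{E}Q(t)+\gamma$ is strictly below $c(t_b)=c_e$ for every $t\in(t_b,T]$, a feasible and strictly profitable deviation that contradicts the equilibrium. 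Since arrivals after $T$ are infeasible and $F$ carries no atoms, the support is exactly $[t_a,T]$ and $F(T)=1$.

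Imposing this normalization on the ODE solution then determines $t_a=-\tfrac{\gamma}{\alpha}\left(1-\tfrac{e^{-\mu T/(1+\alpha/(\gamma\mu))}}{1+\gamma\mu/\alpha}\right)^{-1}$; substituting back gives the density \eqref{Index_equil_T_f}, and the equality $c_e=-\alpha t_a$ yields the stated equilibrium cost. Uniqueness is immediate because the linear ODE together with its prescribed initial value and the normalization $F(T)=1$ admits a unique solution, and a final verification that the candidate cost is constant on $[t_a,T]$ and no smaller off it (in particular $c(t)=-\alpha t>c_e$ for $t<t_a$, and arrival beyond $T$ is infeasible) confirms that the profile is indeed a symmetric equilibrium.
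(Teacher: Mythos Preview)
Your proposal is correct and follows essentially the same route as the paper: uniform density on $[t_a,0)$ from the pre-opening cost identity, the same linear ODE \eqref{Index_diff_eq3} on $[0,T]$ with initial condition $F(0)=\tfrac{-\alpha t_a}{\gamma+\alpha/\mu}$, and the normalization $F(T)=1$ to pin down $t_a$ and hence $c_e=-\alpha t_a$. The one place you are more careful than the paper is in explicitly ruling out a right endpoint $t_b<T$ via the strictly-decreasing-queue deviation argument; the paper in this subsection simply asserts $F(T)=1$ (the argument you give is the one it later records as property~(1) of Theorem~\ref{General_theorem_T}).
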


\begin{figure}[h!]
\centering
\begin{tikzpicture}[xscale=4,yscale=1.6]
  \def\xmin{-1}
  \def\xmax{1.2}
  \def\ymin{0}
  \def\ymax{1.8}
    \draw[->] (\xmin,\ymin) -- (\xmax,\ymin) node[right] {$t$} ;
    \draw[->] (0,\ymin) -- (0,\ymax) node[above] {$f(t)$} ;
    \foreach \x in {-1,-0.38,0,1,}
    \node at (\x,\ymin) [below] {\x};
    \foreach \y in {1,1.5}
    \node at (0,\y+0.1) [left] {\y};
    \draw[red, ultra thick, domain=-1:-0.38] plot (\x, {0});
    \draw[red, ultra thick, domain=-0.38:0] plot (\x,{1.5});
	\draw[red, ultra thick, domain=0:1] plot (\x, {0.844*exp(-1.5*	\x)});
	\draw[red, ultra thick, domain=1:1.2] plot (\x, {0});
	\draw[red,-, densely dashed] (-0.38,0) -- (-0.38,1.5);
   	\draw[red,-, densely dashed] (0,0.844) -- (0,1.5);
	\draw[red,-, densely dashed] (1,0) -- (1,0.1884);
\end{tikzpicture}
\caption{Equilibrium arrival density - index cost and a finite closing time \ $\left(\mu=3, \ \alpha=6, \ \gamma=2, \ T=1 \right)$} 
\end{figure}
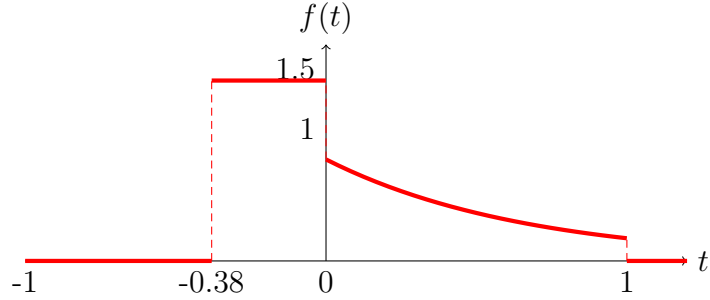

\subsection{Index cost - finite closing time and no early birds}\label{sec_two_index_closing_noearly}
We now wish to combine the two previous scenarios and consider an arrival game where arrivals are only allowed during a finite interval $[0,T]$. We first seek a condition for the strategy of arriving at time zero with probability one to be an equilibrium. This is the case if when the other customer arrives at time zero, the cost of arriving at zero too is lower than arriving at the closing time:
\begin{equation}\label{Index_closed_cost}
\frac{1}{2}\left(\frac{\alpha}{\mu}+\gamma\right)\leq\gamma+\frac{\alpha}{\mu}e^{-\mu T} \Leftrightarrow \frac{\alpha}{\mu}(1-2e^{-\mu T})\leq\gamma .
\end{equation}
Hence, when the parameters satisfy this condition then using the notation of the previous sections we have $p_0=1$. The equilibrium cost here is the expression on the left of \eqref{Index_closed_cost}. Otherwise, we have $p_0<1$ and an equilibrium cost of $\frac{p_0}{2}\left(\gamma+\frac{\alpha}{\mu}\right)$. As before, there is a gap after time zero in the support of the distribution, because for any $p_0$ the cost of arriving at any time $t$ in such a gap is:
\begin{equation}
c(t)=p_0\left(\gamma+\frac{\alpha}{\mu}e^{-\mu t}\right).
\end{equation}
Denote by $t^e$ the first time that this cost equals the equilibrium cost, and we can compute this value by comparing to the equilibrium cost:
\begin{equation}
p_0\left(\gamma+\frac{\alpha}{\mu}e^{-\mu t^e}\right)=\frac{p_0}{2}\left(\gamma+\frac{\alpha}{\mu}\right).
\end{equation}
Clearly, the solution is the same as in the case without a closing time: $t^e=-\frac{1}{\mu}\log\left(\frac{1}{2}\left(1-\frac{\gamma\mu}{\alpha}\right)\right)$. The dynamics after time $t_e$ are as in the previous section with $F(t^e)=p_0$. Thus, we can compute $F(t)$ by solving the equation:
\begin{equation}\label{Index_diff_eq4}
\frac{p_0}{2}\left(\gamma+\frac{\alpha}{\mu}\right)=\frac{\alpha}{\mu}f(t) \left(\frac{1}{\mu}+\frac{\gamma}{\alpha}\right) +\gamma F(t) \ , \ t\in[t^e,T] .
\end{equation}
Finally, together with the equation $F(T)=1$ we obtain:
\begin{equation}
p_0=\frac{2}{1+\frac{\alpha}{\gamma\mu}-\left(\frac{\alpha}{\gamma\mu}-1\right)e^{-\frac{\mu}{1+\frac{\alpha}{\gamma\mu}}(T-t^e)}}
\end{equation} 
and
\begin{equation}
F(t)=\frac{1+\frac{\alpha}{\gamma\mu}-\left(\frac{\alpha}{\gamma\mu}-1\right)e^{-\frac{\mu}{1+\frac{\alpha}{\gamma\mu}}(t-t^e)}}{1+\frac{\alpha}{\gamma\mu}-\left(\frac{\alpha}{\gamma\mu}-1\right)e^{-\frac{\mu}{1+\frac{\alpha}{\gamma\mu}}(T-t^e)}} \ , \ t\in [t^e,T] .
\end{equation}
We conclude this section by stating the resulting theorem:
\begin{theorem}
The equilibrium arrival distribution for the two customer game with arrivals allowed only in the interval $[0,T]$, is given by:
\begin{itemize}
\item[(1)] If $\frac{\alpha}{\mu}(1-2e^{-\mu T})\leq\gamma$ then $p_0=1$, i.e. both customers arrive at time zero and are admitted into service in random order. The expected cost for each customer is $\frac{1}{2}\left(\frac{\alpha}{\mu}+\gamma\right)$.
\item[(2)] If $\frac{\alpha}{\mu}(1-2e^{-\mu T})>\gamma$ then $p_0=\frac{2}{1+\frac{\alpha}{\gamma\mu}-\left(\frac{\alpha}{\gamma\mu}-1\right)e^{-\frac{\mu}{1+\frac{\alpha}{\gamma\mu}}(T-t^e)}}$ and: 
\begin{equation}
f(t)=\frac{p_0\mu}{2}\frac{\frac{\alpha}{\gamma\mu}-1}{\frac{\alpha}{\gamma\mu}+1}e^{-\frac{\mu}{1+\frac{\alpha}{\gamma\mu}}(t-t^e)}\mathbbm{1}_{\lbrace t\in [t^e,T]\rbrace}.
\end{equation}
\item[] The expected cost is $ \frac{p_0}{2}\left(\frac{\alpha}{\mu}+\gamma\right)$ for each customer.
\end{itemize} 
\end{theorem}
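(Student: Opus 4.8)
The plan is to proceed by guess-and-verify for a symmetric equilibrium, splitting into the two regimes according to whether the pure strategy ``arrive at time $0$'' can be sustained. Throughout I would fix the opponent's strategy to consist of an atom $p_0$ at time $0$, a gap on $(0,t^e)$, and a density $f$ on $[t^e,T]$, and I would impose the two standard equilibrium requirements: the cost $c(t)$ must equal a constant $c_e$ on the support, and be at least $c_e$ off it.

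First I would compute the cost of arriving at time $0$ against an opponent placing mass $p_0$ at $0$. Conditioning on whether the opponent is present (probability $p_0$, in which case the random tie-break gives cost $0$ or $\frac{\alpha}{\mu}+\gamma$ with equal chance) or absent (probability $1-p_0$, cost $0$) gives $c(0)=\frac{p_0}{2}(\frac{\alpha}{\mu}+\gamma)$. Comparing this at $p_0=1$ with the cheapest late deviation $\gamma+\frac{\alpha}{\mu}e^{-\mu T}$ (the value at closing time, which is optimal since $\gamma+\frac{\alpha}{\mu}e^{-\mu t}$ decreases in $t$) produces exactly the dichotomy $\frac{\alpha}{\mu}(1-2e^{-\mu T})\le\gamma$ versus $>$. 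In the first regime this same comparison shows the all-at-zero profile admits no profitable deviation, settling part (1).

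For part (2) I would first locate the gap: arriving at $t\in(0,t^e)$ costs $p_0(\gamma+\frac{\alpha}{\mu}e^{-\mu t})$, and equating this to $c_e=\frac{p_0}{2}(\frac{\alpha}{\mu}+\gamma)$ pins down $t^e=-\frac1\mu\log(\frac12(1-\frac{\gamma\mu}{\alpha}))$; here I would note that the regime condition forces $\frac{\alpha}{\mu}>\gamma$, so the logarithm is well-defined and $t^e>0$. On $[t^e,T]$ I would reuse the three-state Markov description and the identity $p_{1,1}(t)=f(t)(\frac1\mu+\frac{\gamma}{\alpha})$ from the no-closing-time analysis (the relation $p_{0,0}=1-F$ still holds, so that derivation carries over verbatim), reducing the constant-cost condition to the first-order linear ODE $\frac{\alpha}{\mu}(\frac1\mu+\frac{\gamma}{\alpha})f(t)+\gamma F(t)=\frac{p_0}{2}(\frac{\alpha}{\mu}+\gamma)$ on $[t^e,T]$.

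The one genuinely delicate point, and the step I expect to be the main obstacle, is that $p_0$ is not an input but an unknown to be determined self-consistently: it appears both as the initial value $F(t^e)=p_0$ and inside the right-hand constant of the ODE, while the terminal condition $F(T)=1$ must hold as well. I would solve the linear ODE to obtain $F(t)=\frac{C}{\gamma}+Ke^{-\frac{\mu}{1+\alpha/(\gamma\mu)}(t-t^e)}$ with $\frac{C}{\gamma}=\frac{p_0}{2}(1+\frac{\alpha}{\gamma\mu})$, impose the two boundary conditions to eliminate $K$, and solve the resulting linear equation for $p_0$, recovering the stated expression; differentiating $F$ then yields the claimed $f$, and substituting back confirms $c_e=\frac{p_0}{2}(\frac{\alpha}{\mu}+\gamma)$. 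A final consistency check verifies that this $p_0$ lies in $(0,1)$ precisely when $\frac{\alpha}{\mu}(1-2e^{-\mu T})>\gamma$ and that $c(t)\ge c_e$ on $(0,t^e)$, completing the equilibrium verification.
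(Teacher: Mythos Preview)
Your proposal is correct and follows essentially the same route as the paper: the paper likewise derives the dichotomy by comparing $\frac{1}{2}(\frac{\alpha}{\mu}+\gamma)$ with $\gamma+\frac{\alpha}{\mu}e^{-\mu T}$, locates $t^e$ from the gap cost, reuses the Section~\ref{sec_two_index} dynamics (noting the initial condition $F(t^e)=p_0$ suffices), and solves the resulting linear ODE together with $F(T)=1$ to pin down $p_0$. Your write-up is in fact slightly more thorough than the paper's, which does not explicitly carry out the off-support and $p_0\in(0,1)$ consistency checks you propose at the end.
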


\begin{figure}[h!]
\centering
\begin{tikzpicture}[xscale=7,yscale=3.5]
  \def\xmin{0}
  \def\xmax{1.2}
  \def\ymin{0}
  \def\ymax{1.1}
    \draw[->] (\xmin,\ymin) -- (\xmax,\ymin) node[right] {$t$} ;
    \draw[->] (0,\ymin) -- (0,\ymax) node[above] {$F(t)$} ;
    \foreach \x in {0,0.55,1}
    \node at (\x,\ymin) [below] {\x};
    \foreach \y in {0,0.83,1}
    \node at (0,\y) [left] {\y};
    \draw[red, ultra thick, domain=0:0.55] plot (\x, {0.8323});
	\draw[red, ultra thick, domain=0.55:1] plot (\x, {(4-2*exp(-0.5*(\x-0.55)))*0.41615});
	\draw[red,->, ultra thick, domain=1:1.2] plot (\x, {1});
	\draw[red,-, densely dashed] (0,0) -- (0,0.8323);
	\draw[blue,-, densely dashed] (0.55,0) -- (0.55,0.8323);
	\draw[blue,-, densely dashed] (0,1) -- (1,1);
	\draw[blue,-, densely dashed] (1,0) -- (1,1);
	\node[draw] at (0.2,0.7) {$p_0=0.83$};
	\node[draw] at (0.7,0.095) {$t^e=0.55$};
	\node[draw] at (1.12,0.095) {$T=1$};
\end{tikzpicture}
\caption{Equilibrium arrival \textit{cdf} -  index cost, finite closing time and no early birds \ $\left(\mu=2, \ \alpha=6, \ \gamma=1, \ T=1 \right)$} 
\end{figure}
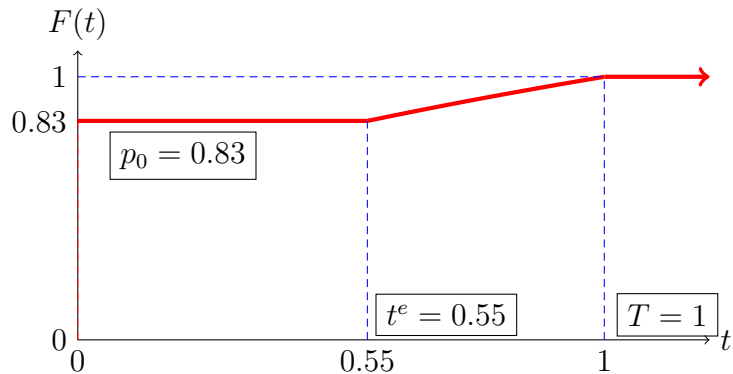

\begin{corollary}
If $\frac{\alpha}{\mu}<\gamma$ then the equilibrium cost is lower than $\gamma$. Otherwise, the equilibrium cost is lower than $\gamma$ if $T\leq\frac{log(2)-log\left(1-\frac{\mu\gamma}{\alpha}\right)}{\mu}$, and at least $\gamma$ if $T>\frac{log(2)-log\left(1-\frac{\mu\gamma}{\alpha}\right)}{\mu}$.
\end{corollary}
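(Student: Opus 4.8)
The plan is to read off the two equilibrium costs already supplied by the preceding theorem and compare each with the benchmark $\gamma$, so that no fresh queueing analysis is required: everything reduces to elementary inequalities together with the algebra that turns the regime-boundary condition into a condition on $T$. Recall that the theorem splits the parameter space into case~(1), where $\frac{\alpha}{\mu}(1-2e^{-\mu T})\le\gamma$, $p_0=1$ and the cost equals $\frac{1}{2}\left(\frac{\alpha}{\mu}+\gamma\right)$, and case~(2), where the cost equals $\frac{p_0}{2}\left(\frac{\alpha}{\mu}+\gamma\right)$ with $p_0<1$. The corollary is therefore a matter of (i) locating the case boundary as a threshold value $T^*:=\frac{\log 2-\log\left(1-\frac{\mu\gamma}{\alpha}\right)}{\mu}$ of the closing time, and (ii) determining the sign of $\mathrm{cost}-\gamma$ in each case.

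First I would dispose of the case $\frac{\alpha}{\mu}<\gamma$. Here $\frac{\alpha}{\mu}(1-2e^{-\mu T})<\frac{\alpha}{\mu}<\gamma$ for every $T>0$, so case~(1) applies for \emph{all} closing times and the cost is the constant $\frac{1}{2}\left(\frac{\alpha}{\mu}+\gamma\right)$; the elementary equivalence $\frac{1}{2}\left(\frac{\alpha}{\mu}+\gamma\right)<\gamma\iff\frac{\alpha}{\mu}<\gamma$ then settles the first assertion at once.

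For the remaining case I would solve the boundary equation $\frac{\alpha}{\mu}(1-2e^{-\mu T})=\gamma$ explicitly: isolating $e^{-\mu T}=\frac{1}{2}\left(1-\frac{\gamma\mu}{\alpha}\right)$ and taking logarithms produces precisely $T^*$, which one recognises as the quantity $t^e$ already appearing in the theorem. This identifies $T\le T^*$ with case~(1) and $T>T^*$ with case~(2). The comparison with $\gamma$ in each case is then the heart of the argument: in case~(1) the cost is the constant $\frac{1}{2}\left(\frac{\alpha}{\mu}+\gamma\right)$, whose position relative to $\gamma$ is again governed by the sign of $\frac{\alpha}{\mu}-\gamma$, while in case~(2) I would track $c_e=\frac{p_0}{2}\left(\frac{\alpha}{\mu}+\gamma\right)$ as a function of $T$ through the explicit formula for $p_0$.

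The step I expect to be the main obstacle is this case~(2) tracking. The plan is to show that $p_0(T)$ is monotone in $T$, with $p_0\to1$ as $T\downarrow T^*$ (so the cost meets the case~(1) value continuously at the boundary) and $p_0\to\frac{2}{1+\frac{\alpha}{\gamma\mu}}$ as $T\to\infty$ (so the cost tends to the no-closing value $\gamma$). Monotonicity then forces the sign of $c_e-\gamma$ to be constant throughout case~(2) and equal to its value in the limit, which pins the comparison down uniformly on the whole half-line $T>T^*$. Assembling the three ingredients — the case boundary expressed as $T^*$, the elementary case~(1) inequality, and the monotone case~(2) estimate with its two limits — then yields the claimed dichotomy about $T^*$.
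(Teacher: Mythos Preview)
Your overall plan---read the two equilibrium costs off the preceding theorem and compare each with $\gamma$---is exactly the right one, and it is what the paper implicitly relies on (the corollary is stated without proof). The identification of the regime boundary $T^*$ with the theorem's condition $\frac{\alpha}{\mu}(1-2e^{-\mu T})=\gamma$, and the monotonicity-plus-limits argument for case~(2), are both correct and would establish that $c_e\ge\gamma$ throughout case~(2).

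There is, however, a genuine gap at the point you yourself flag as ``governed by the sign of $\frac{\alpha}{\mu}-\gamma$'' and then leave unresolved. Carry that step through: in the ``otherwise'' branch we have $\frac{\alpha}{\mu}\ge\gamma$, and for $T\le T^*$ case~(1) applies, so the cost is exactly $\frac{1}{2}\bigl(\frac{\alpha}{\mu}+\gamma\bigr)\ge\gamma$, with strict inequality whenever $\frac{\alpha}{\mu}>\gamma$. This is the \emph{opposite} of the corollary's claim that the cost is ``lower than $\gamma$'' when $T\le T^*$. Your own machinery therefore does not yield the stated dichotomy; it yields that when $\frac{\alpha}{\mu}\ge\gamma$ the equilibrium cost is at least $\gamma$ for \emph{every} $T$ (it starts at $\frac{1}{2}\bigl(\frac{\alpha}{\mu}+\gamma\bigr)$ and decreases monotonically to $\gamma$ as $T\to\infty$). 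A quick sanity check confirms this: with $\mu=2$, $\alpha=6$, $\gamma=1$ one has $T^*=\frac{\log 3}{2}$, and for any $T\le T^*$ the cost is $\frac{1}{2}(3+1)=2>1=\gamma$.

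So the issue is not with your method but with the statement itself: the $T\le T^*$ clause appears to have the inequality reversed. Your write-up should make this explicit rather than asserting that ``assembling the three ingredients\ldots yields the claimed dichotomy,'' which it does not.
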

The conclusion from the above corollary is that limiting arrivals to an interval $[0,T]$ may increase or decrease the social welfare, dependent on the parameters. And in case $\frac{\alpha}{\mu}\geq\gamma$ a lower cost can be achieved by setting an early enough closing time. This is an interesting observation in light of the previous case, without early birds, where enforcing a closing time always leads to a higher cost.

\section{More than two customers} \label{sec_general}
Suppose now that $N+1>2$ customers require service, and that there are no tardiness costs. In this section we characterize the symmetric equilibrium arrival distribution and discuss its properties. As it turns out, many of the observations made for the two customer game are true for any population size. In particular, we remind the reader that according to Lemma \ref{Lemma_no_holes} there can be no "holes" in the distribution, except for the case with no early birds, where there is one such "hole" that we have fully described. In Theorems \ref{General_theorem_noT} and \ref{General_theorem_T} we summarize the equilibrium properties for $T=\infty$ and $T<\infty$, respectively, for both the case where early birds are allowed and for the case where they are not. In Theorem \ref{General_theorem_F} we state the symmetric equilibrium cdf, explicitly for $t<0$ and as a functional differential equation for $t\geq 0$. This is followed by a generalization to a model with both tardiness and order costs. We will also provide methods for the computation of the equilibria and present some numerical examples. \\

Suppose the server has no closing time and customers are allowed to arrive as late as they wish. In the following Theorem we present the properties of the equilibrium under this assumption. Specifically, we show that the support of the equilibrium arrival distribution is either infinite or a single atom at time zero, which should not come as surprise in light of the two customer game analysis. We also show that the customer's individual equilibrium cost is $N\gamma$ in the case where early birds are allowed, and at most $N\gamma$  if they are not allowed.
\begin{theorem}\label{General_theorem_noT}
If $T=\infty$, then a symmetric equilibrium strategy profile satisfies the following properties. \\ \\
If arriving before time zero is allowed, then:
\begin{itemize}
\item[(1)] The support of the equilibrium arrival distribution is an infinite interval $[t_a,\infty)$, where $t_a<0$.
\item[(2)] The expected cost for all customers in equilibrium is $N\gamma$.
\item[(3)] First possible arrival is at $t_a=-N\frac{\gamma}{\alpha}$. 
\end{itemize}
If arriving before time zero is not allowed, then:
\begin{itemize}
\item[(4)]  there is a positive probability to arrive at time zero, given by:
\begin{equation}\label{two_p0}
p_0 =
\left\{
	\begin{array}{ll}
		 \frac{\gamma}{\gamma+\frac{\alpha}{\mu}} & \mbox{, } \frac{\alpha}{\mu}>\gamma  \\
		1 & \mbox{, }  \frac{\alpha}{\mu}\leq\gamma
	\end{array}
\right. .
\end{equation}
\item[(5)] The equilibrium cost is tightly upper bounded by $N\gamma$.
\item[(6)] If $p_0<1$, the remaining support is defined on an interval $[t_e,\infty)$, where $t_e>0$.
\end{itemize}
\end{theorem}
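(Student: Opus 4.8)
The plan is to treat the two regimes separately, leaning on the preliminary structure already in place. \textbf{When early birds are allowed}, Corollary \ref{Corollary_index_interval} already delivers property~(1): the support is an interval $[t_a,\infty)$ with $-\infty<t_a<0$. (The ``single atom at zero'' alternative mentioned before the theorem is excluded in this regime, since a customer facing an atom at $0$ could instead arrive at $-\epsilon$, find no predecessors and an essentially empty system, and drive her cost to $0$.) To reach (2) and (3) I would first record that for $t<0$ no service has occurred, so $\mathbbm{E}Q(t)=\mathbbm{E}A(t)=NF(t)$ and the equilibrium condition becomes $c_e=-\alpha t+(\tfrac{\alpha}{\mu}+\gamma)NF(t)$; differentiating recovers the uniform pre-zero density, and evaluating at the left endpoint, where $F(t_a)=0$ (no atom, by Lemma~\ref{Lemma_no_holes}), gives the clean identity $c_e=-\alpha t_a$. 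It then remains only to identify $c_e$.

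The value $c_e=N\gamma$ is forced by the behaviour of the cost as $t\to\infty$. Because the support is all of $[t_a,\infty)$, the cost equals $c_e$ for every $t\ge t_a$, so $c_e=\lim_{t\to\infty}c(t)$. Here $\gamma\,\mathbbm{E}A(t)=\gamma NF(t)\to N\gamma$, and the remaining task is to show $\tfrac{\alpha}{\mu}\mathbbm{E}Q(t)\to 0$. I would argue this probabilistically: there are only $N+1$ customers in total, each arrives at an almost surely finite time (since $F$ is a proper distribution), so after the a.s.\ finite last arrival the server empties the queue in a.s.\ finite time; hence $Q(t)\to 0$ almost surely, and as $0\le Q(t)\le N+1$ dominated convergence gives $\mathbbm{E}Q(t)\to 0$. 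This yields $c_e=N\gamma$ (property~(2)) and thus $t_a=-N\gamma/\alpha$ (property~(3)). The same ``arrive very late'' comparison shows that in \emph{any} profile a customer can secure a cost arbitrarily close to $N\gamma$, which is precisely the tight upper bound of property~(5), with equality exactly when the support extends to $+\infty$.

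\textbf{When early birds are forbidden}, I would first establish the atom (property~(4)): if $F(0)=0$, a customer arriving at $0$ meets an empty system with no predecessors and pays $0$, undercutting any positive equilibrium cost, so $p_0>0$. Conditioning on the number $K\sim\mathrm{Binomial}(N,p_0)$ of others sharing the atom and on the uniform tie-break, a customer at the atom has on average $Np_0/2$ predecessors, all still in service and ahead of her, so $c(0)=\tfrac{Np_0}{2}\bigl(\tfrac{\alpha}{\mu}+\gamma\bigr)$. If $c(0)\big|_{p_0=1}=\tfrac{N}{2}\bigl(\tfrac{\alpha}{\mu}+\gamma\bigr)\le N\gamma$, i.e.\ $\tfrac{\alpha}{\mu}\le\gamma$, then arriving at $0$ with probability one is a best response and $p_0=1$; otherwise $p_0<1$ and the indifference between the atom and a late arrival, $c(0)=N\gamma$, determines $p_0$. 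For property~(6) I would exploit that on an arrival-free interval $(0,t_e)$ the arrival measure is flat, so $\mathbbm{E}A(t)\equiv Np_0$ while $\mathbbm{E}Q(t)$ is nonincreasing (a pure death process as the server drains the atom); hence $c(t)$ falls from $c(0^{+})=Np_0\bigl(\tfrac{\alpha}{\mu}+\gamma\bigr)=2c(0)=2N\gamma>c_e$ toward $\gamma Np_0<N\gamma$, so by the intermediate value theorem it crosses $c_e$ at a unique $t_e>0$, below which arriving is strictly suboptimal. This forces the hole $(0,t_e)$ and the resumption of the support on $[t_e,\infty)$.

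The main obstacle throughout is the transient queue-length analysis: every cost identity rests on controlling $\mathbbm{E}Q(t)$ for a finite-population, time-inhomogeneous $M/M/1$-type system, in particular the two facts $\mathbbm{E}Q(t)\to 0$ as $t\to\infty$ and the monotone draining of $\mathbbm{E}Q$ on an arrival-free interval; the former is what pins every equilibrium cost to $N\gamma$, and justifying the passage to the limit inside the equilibrium condition is the delicate analytic step. The second, more error-prone point is the combinatorial bookkeeping at the atom: because the simultaneous arrivals are ordered uniformly at random, the naive identity $\mathbbm{E}A(t)=NF(t)$ must be replaced at $t=0$ by the expected number of \emph{predecessors}, $Np_0/2$, and it is this correction (rather than any of the algebra) that I expect to require the most care.
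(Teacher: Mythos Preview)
Your proposal is correct and follows essentially the same route as the paper's proof: the ``arrive very late'' comparison to pin $c_e=N\gamma$, evaluation at $t_a$ for property~(3), the atom computation $c(0)=\tfrac{Np_0}{2}\bigl(\tfrac{\alpha}{\mu}+\gamma\bigr)$ compared against $N\gamma$ for property~(4), and the monotone draining of $\mathbbm{E}Q$ on an arrival-free interval for the gap in~(6). You are more explicit than the paper on two points---the dominated-convergence justification of $\mathbbm{E}Q(t)\to 0$ and the intermediate-value argument locating $t_e$---but these are refinements of presentation, not a different strategy.
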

\begin{proof}
The proof of properties (1) and (2) relies on the same continuity argument used in Lemma \ref{Lemma_index_no_tb}: any customer can ensure a cost almost as low as $N\gamma$ by arriving late enough, and this is lower than the equilibrium cost for any finite support. Property (3) is a direct result of property (2): at time $t_a$ the cost is $-t_a\alpha$, but this must also equal the equilibrium cost along the support, hence we obtain: $t_a=-N\frac{\gamma}{\alpha}$. Property (4) is derived by considering the cost of arriving at time zero if all other customers arrive at time zero with probability one:
\begin{equation}\label{General_zero_cost}
\frac{N}{2}\left(\frac{\alpha}{\mu}+\gamma\right).
\end{equation}
The $(N+1)$'th customer also arrives at time zero if this cost is lower than the cost of arriving at any time $t>0$, and this is indeed the case when the above cost is at most $N\gamma$. Therefore, we deduce that if $\frac{\alpha}{\mu}\leq\gamma$ then $p_0=1$. The cost in this case is exactly \eqref{General_zero_cost} which is clearly smaller than or equal to $N\gamma$. If $\frac{\alpha}{\mu}>\gamma$, then in equilibrium customers arrive at time zero with probability $p_0$ that satisfies: $\frac{Np_0}{2}\left(\gamma+\frac{\alpha}{\mu}\right)=N\gamma$. This yields: $p_0=\frac{\gamma}{\gamma+\frac{\alpha}{\mu}}$. For any $t>0$ we denote by $q_{p}(t)$ the expected queue size at time $t$, if all other customers arrive with probability $p$ at time zero and with probability zero along the interval $(0,t]$. The cost of arriving at time $t>0$ is then:
\begin{equation}
Np_0\gamma+\frac{\alpha}{\mu}q_{p_0}(t) \underset{t\downarrow 0}{\rightarrow}  Np_0\left(\gamma+\frac{\alpha}{\mu}\right).
\end{equation}
Note that this is a monotone decreasing function in $t$. Thus, we can conclude that in equilibrium there is an interval $(0,t_e)$ with no arrivals. The length of this interval $t_e$, satisfies uniquely:
\begin{equation}\label{General_te_equation}
\frac{Np_0}{2}\left(\gamma+\frac{\alpha}{\mu}\right)=Np_0\gamma+\frac{\alpha}{\mu}q_{p_0}(t_e).
\end{equation}
We point out that if $p_0=1$, then the equilibrium cost is lower than $N\gamma$ and if $p_0<1$, then it is exactly $N\gamma$. \qed
\end{proof}

\begin{remark}
The value of $q_p(t)$ which was defined in the above proof can be computed using the properties of the Poisson distribution:
\begin{equation}
q_p(t)=\sum_{j=0}^{N} {N\choose j} p^j(1-p)^{N-j}  \sum_{i=0}^{j}(j-i)\frac{e^{-\mu t}(\mu t)^i}{i!}.
\end{equation}
To complete this definition we define the limit $q_p(\infty):=\lim_{t\rightarrow \infty}q_p(t)=0$. We shall make use of this notation throughout the remainder of this section.
\end{remark}

We now impose a finite closing time for the server, after which customers can no longer arrive. Customers who have arrived before this time will be served even if they are still in service or in the queue at closing time. The next Theorem presents the equilibrium properties of this scenario. Clearly the arrival distribution has a finite support in this case, and the individual equilibrium cost is higher than $N\gamma$.
\begin{theorem}\label{General_theorem_T}
If $T<\infty$, then a symmetric equilibrium strategy profile satisfies the following properties. \\ \\
If arriving before time zero is allowed, then:
\begin{itemize}
\item[(1)] The support of the equilibrium arrival distribution is a finite interval $[t_a,T]$, where $t_a<-N\frac{\gamma}{\alpha}$.
\item[(2)] The expected cost for all customers in equilibrium is larger than $N\gamma$.
\end{itemize}
If arriving before time zero is not allowed, then:
\begin{itemize}
\item[(3)] There is a positive probability to arrive at time zero. Denote this probability by $p_0$. If the following condition is met then $p_0=1$, otherwise $p_0<1$:
\begin{equation}
\frac{N}{2}\left(1-\frac{\gamma\mu}{\alpha}\right)\leq q_1(T).
\end{equation}
\item[(4)] The condition $\frac{\alpha}{\mu}\leq\gamma$ is sufficient (but not necessary) for $p_0=1$.  
\item[(5)] If $p_0<1$, then the remaining support is defined on an interval $[t_e,T]$, where $t_e\in(0,T)$.
\end{itemize}
\end{theorem}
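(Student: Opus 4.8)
The plan is to transcribe the proof of Theorem \ref{General_theorem_noT} to the finite horizon, treating the early-birds and no-early-birds regimes separately and replacing the role of $t\to\infty$ by the terminal time $T$.

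First I would dispose of properties (1)--(2). Since arrivals after $T$ are forbidden we have $F(T)=1$, and Lemma \ref{Lemma_no_holes} forces the support to be a single interval with right endpoint at most $T$. I claim the endpoint is exactly $T$: if the support ended at some $t'<T$ then no arrivals occur on $(t',T]$, so there $\mathbbm{E}A(t)\equiv N$ and $Q$ evolves as a pure service (death) process with $\mathbbm{E}Q$ strictly decreasing; reading \eqref{General_index_cost} this gives $c(T)<c(t')=c_e$, so deviating to $T$ is profitable, a contradiction. Hence $T$ belongs to the support and $c_e=c(T)=N\gamma+\frac{\alpha}{\mu}\mathbbm{E}Q(T)$. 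Because the server is still busy at $T$ with positive probability, $\mathbbm{E}Q(T)>0$, which is property (2). The left endpoint is handled as in Corollary \ref{Corollary_index_interval}: $c(t)\to\infty$ as $t\to-\infty$ makes $t_a$ finite and negative, and evaluating \eqref{General_index_cost} at $t_a$, where $F(t_a)=0$, gives $c_e=-\alpha t_a$; combined with $c_e>N\gamma$ this yields $t_a=-c_e/\alpha<-N\frac{\gamma}{\alpha}$, i.e. property (1).

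For the no-early-birds regime I would first reuse the argument of Theorem \ref{General_theorem_noT}(4) to conclude there is an atom $p_0>0$ at time zero (otherwise a customer secures a nearly free leading position by arriving at zero). To find the threshold I suppose the $N$ competitors put all mass at zero: the tagged customer's cost of joining them at zero is \eqref{General_zero_cost}, whereas arriving at $t\in(0,T]$ costs $N\gamma+\frac{\alpha}{\mu}q_1(t)$, which is decreasing in $t$ (again a death process) and hence minimized at $T$. Thus $p_0=1$ is sustainable exactly when $\frac{N}{2}(\gamma+\frac{\alpha}{\mu})\le N\gamma+\frac{\alpha}{\mu}q_1(T)$, and a one-line rearrangement turns this into the stated criterion $\frac{N}{2}(1-\frac{\gamma\mu}{\alpha})\le q_1(T)$, giving (3). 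Property (4) then falls out: if $\frac{\alpha}{\mu}\le\gamma$ the left side is nonpositive and the criterion holds automatically, while it is not necessary because for small $T$ the quantity $q_1(T)$ stays close to its value $q_1(0)=N$, which exceeds $\frac{N}{2}(1-\frac{\gamma\mu}{\alpha})$ even when $\frac{\alpha}{\mu}>\gamma$.

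Finally, for property (5) I would take $p_0<1$, so that the equilibrium cost is $c_e=\frac{Np_0}{2}(\gamma+\frac{\alpha}{\mu})$. The cost immediately to the right of zero equals $Np_0(\gamma+\frac{\alpha}{\mu})=2c_e>c_e$ and then decreases as the queue drains, so arrivals cannot resume until the instant $t_e>0$ at which the cost returns to $c_e$, characterized by \eqref{General_te_equation} with this $p_0$. The step I expect to be the main obstacle is verifying that this produces a genuinely interior configuration, namely a consistent pair $(p_0,t_e)$ with $p_0\in(0,1)$, $t_e\in(0,T)$ and total arrival mass equal to one. One direction is easy ($t_e<T$, for otherwise no mass could be delivered in $(0,T]$, contradicting $p_0<1$); the difficulty is that, unlike the two-customer case, the density on $[t_e,T]$ is governed by the functional differential equation of Theorem \ref{General_theorem_F} rather than an explicit ODE, so closed-form elimination is unavailable. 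I would therefore pin down $p_0$ through the normalization $F(T)=1$ and establish existence and the interior location of $t_e$ by a continuity/monotonicity argument in $p_0$ — the mass delivered on $[t_e(p_0),T]$ varying continuously from the degenerate $p_0=1$ boundary — deferring the actual solvability of the density to Theorem \ref{General_theorem_F}.
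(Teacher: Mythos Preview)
Your proposal is correct and follows essentially the same line as the paper's proof, which is considerably terser: for (1)--(2) it simply notes that $t_b<T$ would allow a profitable deviation to any $t\in(t_b,T]$, and for (5) it refers back to the argument for property (6) of Theorem~\ref{General_theorem_noT}. One minor difference worth noting: to rule out $t_e=T$ the paper argues directly that placing the residual mass $1-p_0$ as an atom at $T$ would make $c(T)>c(0)$, rather than your mass-accounting argument; and your existence concern for the consistent pair $(p_0,t_e)$ is legitimate but is not addressed in the paper's proof either --- both treatments defer the actual construction to Theorem~\ref{General_theorem_F}.
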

\begin{proof}
We first consider the case where arrivals are allowed at any time $t\leq T$. Suppose the support of $F$ is $[t_a,t_b]$, such that $t_a\leq 0$ and $t_b\leq T$. The cost of arriving at time $t_b$ is $N\gamma+\frac{\alpha}{\mu}\mathbbm{E}Q(t_b)$, and this is also the equilibrium cost. Now assume that $t_b<T$ then the cost of arriving at any time $t\in (t_b,T]$ is lower than the equilibrium cost and this is obviously a contradiction. The above yields properties (1) and (2). \\
Next we consider the case where arrivals are only allowed in the interval $[0,T]$. If all arrive at time zero with probability one, then $p_0=1$ is the best response for the $(N+1)$'th customer if the cost at zero is not be higher than the cost of arriving at $T$:
\begin{equation*}
\frac{N}{2}\left(\frac{\alpha}{\mu}+\gamma\right)\leq N\gamma+\frac{\alpha}{\mu}q_1(t).
\end{equation*} \footnote{If this is an equality, then there are $N+1$ pure and non-symmetric equilibriums of the form: one customer arrives at time $T$ and all others at time zero.}
Simple algebra leads to property (3). Note that if $\frac{\alpha}{\mu}\leq\gamma$, then this condition is met trivially for any $T>0$, hence properties (3) and (4) are obtained. Property (5) here follows the same argument leading to property (6) in the previous theorem. Note that $p_0$ such that $t_e=T$ cannot be an equilibrium because customers would have to arrive at $T$ with probability $1-p_0$, leading to a higher cost than that incurred by arriving at time zero (a strictly positive mass will be added to the cost). The value of $t_e$ is given by equation \eqref{General_te_equation}. \qed
\end{proof}

We have thus far characterized the equilibrium arrival distribution up until time zero. In Lemma \ref{Lemma_no_holes} we showed that the distribution can have no holes or atoms in the interior of the support. Therefore, we seek a continuous distribution defined by a density function $f(t)$ such that $\int_{t_e}^{T}f(t)dt=1-F(0)$. The equilibrium condition is that the cost is constant for all $t$ such that $f(t)>0$. To achieve this we now elaborate on the second relation\footnote{The first relation is the equilibrium condition of a constant cost on all of the support.} between the arrival distribution and the queueing process. The process $\lbrace Q(t):t\geq 0\rbrace$ itself is not Markovian because the number of arrivals after time $t$ is not independent of the number of arrival up to it. However, $\lbrace \left(Q(t),A(t)\right):t\geq 0\rbrace$ is a non-homogeneous in time, two dimensional Markov process satisfying:
\begin{equation}\label{General_Q_dynamics}
\begin{aligned}
	p_{0,j}^{'}(t) &= \mu p_{1,j}(t)-(N-j)h(t)p_{0,j}(t), \ 0\leq j\leq N  \\
	p_{i,j}^{'}(t) &= \mu p_{i+1,j}(t)+(N-j+1)h(t)p_{i-1,j-1}(t)    \\
				   & -(\mu+(N-j)h(t))p_{i,j}(t), \ 1\leq i\leq j\leq N
\end{aligned},
\end{equation}

where $p_{i,j}(t)=\mathbbm{P}(Q(t)=i,A(t)=j)$ for $0\leq i\leq j\leq N$, and $p_{i,j}(t)=0$ otherwise. We denote the hazard rate of $F$ by: $h(t):=\frac{f(t)}{1-F(t)}$. If early birds are allowed, then the initial conditions for these equations are given by:
\begin{equation}\label{General_init_conditions}
p_{i,j}(0)={N \choose i} F(0)^i (1-F(0))^{N-i} \mathbbm{1}_{\lbrace i=j\rbrace}.
\end{equation}
If early birds are not allowed, then these conditions can be modified to:
\begin{equation}
p_{i,j}(t_e)={N \choose j} p_0^j (1-p_0)^{N-j} \frac{e^{-\mu t_e}(\mu t_e)^{j-i}}{(j-i)!} \mathbbm{1}_{\lbrace j\geq i\rbrace}.
\end{equation}
Clearly, $\mathbbm{P}(Q(t)=0)=\sum_{j=0}^N p_{0,j}(t)$. We have already shown that if there is no closing time, then the support of the equilibrium distribution is infinite. In this case, if early birds are allowed then $t_a=-\frac{N\gamma}{\alpha}$ and $F(0)=-t_a\frac{\alpha\mu}{N(\alpha+\gamma\mu)}$. Otherwise, $F(0)=p_0$ which was derived in Theorem \ref{General_theorem_noT}. If there is a closing time, then $t_a$ or $p_0$ need to be determined according to the conditions described in Theorem \ref{General_theorem_T}.

The following Theorem presents the equilibrium arrival distribution using the above defined process.

\begin{theorem}\label{General_theorem_F}
If early birds are allowed, then the equilibrium arrival distribution before time zero is uniform with density:
\begin{equation}\label{General_f_negative}
f(t)=\frac{\alpha\mu}{N(\alpha+\gamma\mu)} \ , \ t\in [t_a,0].
\end{equation}
In both cases, after time zero the equilibrium arrival distribution is characterized by the following functional differential equation:
\begin{equation}\label{General_eq_dynamics}
f(t)=\frac{\alpha\left(1-\mathbbm{P}(Q(t)=0)\right)}{N(\frac{\alpha}{\mu}+\gamma)} \ , \ t\in[t_e,T),
\end{equation}
where $t_e=0$ if early birds are allowed.
\end{theorem}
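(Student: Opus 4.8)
The plan is to derive both parts of the theorem from the single equilibrium requirement that the cost $c(t)$ in \eqref{General_index_cost} be constant on the support, by differentiating in $t$ and solving the resulting relation for $f$. Throughout I use the identity $\mathbbm{E}A(t)=NF(t)$ recorded after \eqref{General_cost}, so that the order-cost term equals $\gamma N F(t)$ and its derivative is $\gamma N f(t)$.

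For $t<0$ the argument is immediate. Since the server does not operate before time zero, no departures occur and every arrival is still present, so $\mathbbm{E}Q(t)=\mathbbm{E}A(t)=NF(t)$. Hence $c(t)=-\alpha t+N\left(\tfrac{\alpha}{\mu}+\gamma\right)F(t)$, and imposing $c'(t)=0$ on the support gives $-\alpha+N\left(\tfrac{\alpha}{\mu}+\gamma\right)f(t)=0$, which rearranges to \eqref{General_f_negative}. This reproduces the uniform density and, as a consistency check, is compatible with $t_a=-N\gamma/\alpha$ and the value of $F(0)$ already recorded before the theorem.

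For $t\geq t_e$ the cost loses the $-\alpha t$ term, so $c(t)=\tfrac{\alpha}{\mu}\mathbbm{E}Q(t)+\gamma N F(t)$, and the differentiated equilibrium condition reads $\tfrac{\alpha}{\mu}\tfrac{d}{dt}\mathbbm{E}Q(t)+\gamma N f(t)=0$. The crux is therefore to evaluate $\tfrac{d}{dt}\mathbbm{E}Q(t)$. I would do this by writing $Q(t)=A(t)-D(t)$, where $D(t)$ counts completed services; since the exponential server releases work at rate $\mu$ exactly when $Q(t)>0$, the expected departure rate is $\mu\,\mathbbm{P}(Q(t)>0)=\mu\bigl(1-\mathbbm{P}(Q(t)=0)\bigr)$, while the expected arrival rate is $\tfrac{d}{dt}\mathbbm{E}A(t)=Nf(t)$. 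This yields $\tfrac{d}{dt}\mathbbm{E}Q(t)=Nf(t)-\mu\bigl(1-\mathbbm{P}(Q(t)=0)\bigr)$. Substituting and collecting the $f(t)$ terms gives $\alpha\bigl(1-\mathbbm{P}(Q(t)=0)\bigr)=N\left(\tfrac{\alpha}{\mu}+\gamma\right)f(t)$, which is exactly \eqref{General_eq_dynamics}. Because both the constant-cost condition and the transition rates in \eqref{General_Q_dynamics} are identical whether or not early birds are allowed, the same equation holds in both cases, with the starting time $t_e$ and the initial law supplied by Theorems \ref{General_theorem_noT} and \ref{General_theorem_T}.

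I expect the step needing the most care to be the evaluation of $\tfrac{d}{dt}\mathbbm{E}Q(t)$. Rather than rely only on the heuristic arrival/departure balance, I would verify it directly from the forward equations \eqref{General_Q_dynamics}: differentiating $\mathbbm{E}Q(t)=\sum_{i,j} i\,p_{i,j}(t)$ term by term and re-indexing, the service terms telescope to $-\mu\sum_{i\geq 1,\,j}p_{i,j}(t)=-\mu\bigl(1-\mathbbm{P}(Q(t)=0)\bigr)$, while the arrival terms collapse, using $\sum_{k}p_{k,m}=\mathbbm{P}(A(t)=m)$ and $h(t)=f(t)/(1-F(t))$, to $h(t)\,\mathbbm{E}[N-A(t)]=h(t)N(1-F(t))=Nf(t)$. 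Finally I would emphasize that \eqref{General_eq_dynamics} is only a characterization: it is genuinely a functional differential equation because $\mathbbm{P}(Q(t)=0)$ depends on the entire past of $f$ through \eqref{General_Q_dynamics}, so that identifying an honest cdf solving it on $[t_e,T)$ with the boundary data $F(t_e)$ and $F(T)=1$ is what actually pins down the equilibrium, and this closure is what must be treated numerically rather than in closed form.
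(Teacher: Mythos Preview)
Your proof is correct and follows essentially the same approach as the paper: differentiate the constant-cost condition on the support and solve for $f$, using $\mathbbm{E}Q(t)=NF(t)$ before zero and the arrival-minus-departure balance $\tfrac{d}{dt}\mathbbm{E}Q(t)=Nf(t)-\mu(1-\mathbbm{P}(Q(t)=0))$ after zero. The paper encodes the latter via the integral identity $\mathbbm{E}Q(t)=NF(t)-\mu t+\mu\int_0^t\mathbbm{P}(Q(s)=0)\,ds$, which is the same computation, and your additional verification from the forward equations \eqref{General_Q_dynamics} is a nice supplement that the paper omits.
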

\begin{proof}
At any time $t<0$ the number of arrivals equals the queue size. Therefore, given that all of the other $N$ customers arrive according to $F$, the cost of arriving at $t<0$ for the $(N+1)$'th customer is:
\begin{equation}\label{General_cost_early}
c(t)= -\alpha t+\left(\frac{\alpha}{\mu}+\gamma\right) NF(t).
\end{equation}
By taking derivative of \eqref{General_cost_early} we immediately obtain \eqref{General_f_negative}.

Utilizing standard queueing dynamics we can state the cost function in the following form:
\begin{equation}\label{General_cost_noTardiness}
c(t)= \frac{\alpha}{\mu}\left(NF(t)-\mu t+\mu\int_{0}^{t} \mathbbm{P} (Q(s)=0)ds\right)+\gamma NF(t),
\end{equation}
and by taking derivative we get \eqref{General_eq_dynamics}.\qed
\end{proof}

\subsection{Tail behaviour of the arrival distribution}\label{sec_general_tail}
In section \ref{sec_two_customer} we showed that the equilibrium arrival distribution is exponential in the two customer game. For a general number of customers this is not the case, but we can show that the arrival distribution is still light tailed. Specifically, that there exists some $\eta>0$ such that:
\begin{equation}\label{eq_light_tail}
\lim_{t\rightarrow\infty}e^{\eta t}(1-F(t))<\infty.
\end{equation}
We further show that there exists some $\eta<\mu$ such that $e^{\eta t}(1-F(t))$ has a finite and non-zero limit. Numerical analysis suggests $\eta=F(0)\mu$, but we were unable to explicitly prove this limit. We will provide an outline of how more accurate analysis of the tail behaviour can be conducted in the future.

We first prove a supporting lemma that states that if the tail of the hazard rate is increasing, then the distribution is light tailed. We then proceed to show that the equilibrium arrival distribution characterized in Theorem \ref{General_theorem_F} satisfies this condition.

\begin{lemma}\label{General_hazard_tail}
Let $X$ be a non-negative and continuous random variable with \textit{cdf} $F$ and density $f$. If there exists a $\tau$ such that for all $t>\tau$ the hazard rate is non-decreasing: $h(t+s)-h(t)\geq 0, \ \forall s>0$, then $X$ is light tailed in the sense of \eqref{eq_light_tail}.
\end{lemma}
\begin{proof}
Denote the tail probability of $X$ by $\overline{F}(t)=1-F(t)$. Recall that an equivalent definition of the hazard function for continuous random variables is $H(T)=\int_{0}^t h(u)du=-log\overline{F}(t)$. So the tail probability can be represented using the hazard rate:
\begin{equation}\label{eq_tail_hazard}
\overline{F}(t)=e^{-\int_{0}^{t}h(u)du}.
\end{equation}
If there exists some $\tau>0$ after which the hazard rate is non-decreasing, then for any $t>\tau$:
\begin{equation*}
\int_{0}^{t}h(u)du\geq K_\tau+(t-\tau)h(\tau),
\end{equation*}
where $K_\tau=\int_{0}^{\tau}h(u)du$. Thus, we can bound the tail probability:
\begin{equation*}
\overline{F}(t)\leq e^{-K_\tau-(t-\tau)h(\tau)}=e^{- h(\tau)t}e^{Th(\tau)-K_\tau}.
\end{equation*}
Finally, if we denote $\eta=h(\tau)$ then we can conclude that the distribution is indeed light tailed:
\begin{equation*}
\lim_{t\rightarrow\infty}e^{\eta t}\overline{F}(t)\leq e^{\eta\tau-K_\tau}<\infty.
\end{equation*}
\qed
\end{proof}

\begin{lemma}\label{General_lemma_exponential_tail}
The equilibrium arrival distribution characterized in Theorem \ref{General_theorem_F} satisfies the condition of Lemma \ref{General_hazard_tail}, and is therefore light tailed. Moreover, there exists some $\eta<\mu$ such that:
\begin{equation}
\lim_{t\rightarrow\infty}e^{\eta t}(1-F(t))=C,
\end{equation}
where $0<C<\infty$.
\end{lemma}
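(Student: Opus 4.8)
The plan is to work directly with the hazard rate $h(t)=f(t)/\overline{F}(t)$ and to exploit the functional differential equation \eqref{General_eq_dynamics}, which I abbreviate as $f(t)=\kappa\,(1-\mathbbm{P}(Q(t)=0))$ with $\kappa:=\frac{\alpha\mu}{N(\alpha+\gamma\mu)}$. Writing $R(t):=(1-\mathbbm{P}(Q(t)=0))/\overline{F}(t)$ one has $h(t)=\kappa R(t)$, so every statement about $h$ becomes a statement about $R$. I would first record a uniform upper bound. Since the equilibrium cost equals $N\gamma$ throughout the support, the relation $c(t)=\frac{\alpha}{\mu}\mathbbm{E}Q(t)+\gamma N F(t)=N\gamma$ gives the exact identity $\mathbbm{E}Q(t)=\frac{N\gamma\mu}{\alpha}\overline{F}(t)$. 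Because $1-\mathbbm{P}(Q(t)=0)=\mathbbm{P}(Q(t)\ge1)\le\mathbbm{E}Q(t)$, this yields $R(t)\le\frac{N\gamma\mu}{\alpha}$ and hence
\begin{equation*}
h(t)\le\frac{\gamma\mu^{2}}{\alpha+\gamma\mu}<\mu\qquad\text{for all }t,
\end{equation*}
which is the source of the bound $\eta<\mu$.

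The central step is the eventual monotonicity of $h$. Differentiating $h=f/\overline{F}$, using $f'=-\kappa\,\mathbbm{P}(Q=0)'$, and substituting the Kolmogorov equations \eqref{General_Q_dynamics} (whose empty-queue rows give $\mathbbm{P}(Q=0)'=\mu\,\mathbbm{P}(Q(t)=1)-h(t)B(t)$ with $B(t):=\sum_{j}(N-j)p_{0,j}(t)$, the rate of arrivals into an empty queue per unit $h$), the condition $h'(t)\ge0$ reduces to
\begin{equation*}
h(t)\big(B(t)+\mathbbm{P}(Q(t)\ge2)\big)\ge(\mu-h(t))\,\mathbbm{P}(Q(t)=1).
\end{equation*}
This is the \emph{main obstacle}. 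To verify it for large $t$ I would expand the state probabilities in powers of $\overline{F}(t)$: exactly one missing customer gives $p_{0,N-1}(t)=N\overline{F}(t)+O(\overline{F}(t)^{2})$, hence $B(t)=N\overline{F}(t)+O(\overline{F}(t)^{2})$, while $\mathbbm{P}(Q(t)\ge2)=O(\overline{F}(t)^{2})$ and $\mathbbm{P}(Q(t)=1)=(1-\mathbbm{P}(Q(t)=0))+O(\overline{F}(t)^{2})$. Substituting these together with the leading value $h\to\frac{\gamma\mu^{2}}{\alpha+\gamma\mu}$, the $O(\overline{F})$ terms on the two sides cancel identically, so that the sign of $h'$ is decided only by the $O(\overline{F}(t)^{2})$ corrections. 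Showing that this net second-order contribution is non-negative is the delicate part; I expect it to require a careful accounting of the two-in-system probabilities, and it is exactly this second-order cancellation that blocks a clean determination of the precise value of $\eta$.

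Granting the monotonicity, the rest is routine. As $h$ is eventually non-decreasing and bounded above by $\frac{\gamma\mu^{2}}{\alpha+\gamma\mu}<\mu$, Lemma \ref{General_hazard_tail} applies and delivers the light tail \eqref{eq_light_tail}, and moreover $\eta:=\lim_{t\to\infty}h(t)$ exists with $\eta<\mu$. For the sharp statement, set $\phi(t):=e^{\eta t}\overline{F}(t)$, so that $\phi'/\phi=\eta-h\ge0$; thus $\phi$ is eventually non-decreasing and its limit $C$ satisfies $C\ge1>0$ automatically, and it remains only to prove $C<\infty$, i.e. $\int^{\infty}(\eta-h(u))\,du<\infty$. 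Crucially this can be done without identifying $\eta$: writing $R_{\infty}:=\lim R(t)\le\frac{N\gamma\mu}{\alpha}$ and using $R(t)\le R_{\infty}$, one bounds
\begin{equation*}
0\le R_{\infty}-R(t)\le\frac{N\gamma\mu}{\alpha}-R(t)=\frac{\mathbbm{E}Q(t)-(1-\mathbbm{P}(Q(t)=0))}{\overline{F}(t)}=\frac{\sum_{i\ge2}(i-1)\mathbbm{P}(Q(t)=i)}{\overline{F}(t)}.
\end{equation*}
By the $O(\overline{F}(t)^{2})$ estimate on the queue excess $\sum_{i\ge2}(i-1)\mathbbm{P}(Q(t)=i)$, the right-hand side is $O(\overline{F}(t))$, which is integrable by the light-tail bound already established. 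Hence $\int^{\infty}(\eta-h)=\kappa\int^{\infty}(R_{\infty}-R)<\infty$, so $C<\infty$ and $e^{\eta t}\overline{F}(t)\to C\in(0,\infty)$, completing the proof.
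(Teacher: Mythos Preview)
Your proposal diverges from the paper precisely at the monotonicity step, and that is where it has a genuine gap. The paper does \emph{not} differentiate $h$ and chase second-order terms in $\overline{F}$. Instead it works with the ratio $g_x(t)=\dfrac{F(t+x)-F(t)}{1-F(t)}$, whose increase in $t$ (for each fixed $x>0$) is equivalent to an increasing hazard rate. Using $f=\kappa(1-p_0)$ this becomes
\[
g_x(t)=\frac{\int_t^{t+x}(1-p_0(u))\,du}{\int_t^{\infty}(1-p_0(u))\,du},
\]
and the paper argues that the numerator decreases at the rate $(1-p_0(t))-(1-p_0(t+x))$, which is slower than the denominator's rate $1-p_0(t)$, so the ratio is eventually increasing. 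This sidesteps your ``main obstacle'' entirely: no second-order cancellation is ever needed. By contrast, your route---differentiating $h$ and expanding $B(t)$, $\mathbbm{P}(Q\ge1)$, $\mathbbm{P}(Q\ge2)$ in powers of $\overline{F}$---lands on an inequality whose $O(\overline{F})$ terms cancel exactly (as you correctly observe), leaving the sign to be decided at second order, which you explicitly leave unresolved. There is also a circularity: your expansion substitutes the limiting value $h\to\frac{\gamma\mu^{2}}{\alpha+\gamma\mu}$, but the existence of that limit is precisely what the monotonicity is meant to establish.

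Two aspects of your write-up are, however, improvements on the paper. Your upper bound $h(t)\le\frac{\gamma\mu^{2}}{\alpha+\gamma\mu}$, obtained from the clean equilibrium identity $\mathbbm{E}Q(t)=\frac{N\gamma\mu}{\alpha}\,\overline{F}(t)$ together with $\mathbbm{P}(Q\ge1)\le\mathbbm{E}Q$, is sharper than the paper's L'H\^opital bound $h<\mu$ and in fact pins down the very constant the paper only conjectures numerically as the tail hazard. And your attempt to verify $0<C<\infty$ via $\phi(t)=e^{\eta t}\overline{F}(t)$ goes beyond the paper, whose proof effectively stops once $h$ is shown bounded and eventually monotone. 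That said, your argument here also has holes: eventual monotonicity of $\phi$ gives $C>0$ but not the asserted ``$C\ge 1$ automatically'', and the integrability of $\eta-h$ rests on the claim $\sum_{i\ge 2}(i-1)\mathbbm{P}(Q(t)=i)=O(\overline{F}(t)^{2})$, which you have not proven---the queue can contain customers who arrived well before $t$, so $\mathbbm{P}(Q\ge2)$ is not controlled merely by counting late arrivals.
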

\begin{proof}
We prove this lemma by using the properties of the equilibrium and the underlying queueing process to show that the tail of the hazard function is bounded and non-decreasing, and therefore both satisfies the condition of Lemma \ref{General_hazard_tail} and has a non-zero limit. 

If we denote $p_i(t):=\mathbbm{P}(Q(t)=i)$, then according to Theorem \ref{General_theorem_F} the density for $t>0$ is $f(t)=\frac{1-p_0(t)}{N\left(\frac{1}{\mu}+\frac{\gamma}{\alpha}\right)}$, hence the hazard rate is:
\begin{equation}\label{eq_hazard}
h(t)=\frac{f(t)}{\int_t^\infty f(u)du}=\frac{1-p_0(t)}{\int_t^\infty 1-p_0(u)du}.
\end{equation}
Consider the dynamics of the process $\{Q(t),A(t)\}$ as defined in \eqref{General_Q_dynamics}. By taking a sum on the number of arrivals we have:
\begin{equation}\label{eq_p0_deriv}
\begin{split}
p_0^{'}(t) &= \sum_{j=0}^N p_{0,j}(t)=\mu\sum_{j=0}^N p_{1,j}(t)-\sum_{j=0}^N(N-j)h(t)p_{0,j}(t)\\
&=\mu p_1(t)-h(t)\sum_{j=0}^{N-1}(N-j)p_{0,j}(t)
\end{split}.
\end{equation}
The state $(0,N)$ is clearly an absorbing one, as after all customers have arrived and have been served, there will be no more arrivals or departures. This implies the following when $t$ goes to infinity:
\begin{enumerate}
\item $p_{0,N}(t)\rightarrow 1$ and $p_{i,j}(t)\rightarrow 0, \ \forall (i,j)\neq(0,N)$
\item $p_{0}(t)\rightarrow 1$ and $p_i(t)\rightarrow 0, \ \forall i\geq 1$
\item $p_{0}^{'}(t)\rightarrow 0$.
\end{enumerate}
We characterize the tail behaviour, i.e. for large values of $t$, of the hazard rate by applying L'Hopital's Rule:
\begin{equation}\label{eq_hazard_tail_approx}
h(t) \sim \frac{p_0^{'}(t)}{1-p_0(t)}=\frac{\mu p_1(t)-h(t)\sum_{j=0}^{N-1}(N-j)p_{0,j}(t)}{\sum_{i=1}^Np_i(t)}.
\end{equation}
We first note that the \textit{RHS} is upper bounded by $\mu$:
\begin{equation}\label{eq_hazard_bound}
h(t) < \frac{\mu p_1(t)}{p_1(t) } = \mu.
\end{equation}
We further show that the tail of the hazard rate is strictly increasing. We define the function:
\begin{equation}\label{eq_hazard_g}
g_x(t):=\frac{F(t+x)-F(t)}{1-F(t)}.
\end{equation}
If $g_x(t)$ is increasing in $t$, for any $x>0$, then the hazard rate is increasing too (this is easily verified by \eqref{eq_tail_hazard}). We observe the tail behaviour of $g_x(t)$:
\begin{equation}
g_x(t) = \frac{\int_t^{t+x}1-p_0(u)du}{\int_t^{\infty}1-p_0(u)du}=\frac{\int_t^{\infty}1-p_0(u)du-\int_{t+x}^{\infty}1-p_0(u)du}{\int_t^{\infty}1-p_0(u)du}.
\end{equation}
The denominator is decreasing to zero at rate $1-p_0(t)$, while the numerator is decreasing at a slower rate of $1-p_0(t)-(1-p_0(t+x))$. Therefore the function $g_x(t)$ is increasing w.r.t. $t$, and so is $h(t)$.
\qed
\end{proof}

More accurate analysis of the tail behaviour may perhaps be achieved by applying the tools of Quasi-Stationary finite state Markov chains (see \cite{vDP2013}). Consider the approximation of the tail given in \eqref{eq_hazard_tail_approx}: 
\begin{equation*}
\begin{split}
h(t) &\sim \mu\frac{p_1(t)}{1-p_0(t)}-\frac{h(t)}{1-p_0(t)}\sum_{j=0}^{N-1}(N-j)p_{0,j}(t) \\
&\sim \mu\frac{p_1(t)}{1-p_0(t)}-p_0^{'}(t)\sum_{j=0}^{N-1}(N-j)p_{0,j}(t) \\
&\sim \mu\frac{p_1(t)}{1-p_0(t)}
\end{split}.
\end{equation*}
The last approximation is given by the fact that both terms in the negative product go to zero, by the properties of the process as described above. We are left with showing that $\frac{p_1(t)}{1-p_0(t)}$ has a non-zero limit when $t$ goes to infinity. This is in fact a conditional distribution of being in state $\{Q(t)=1\}$ given that the queue is not empty. The limit of this term can then be seen, after slight modification, as a quasi-stationary distribution \cite{vDP2013}. In other words, as the limit probability of being in a transient state conditioned on the fact that the absorbing state has not been reached. Another point of interest in this context is characterizing the time until absorption, which in our setting is the time until all customers have arrived and have been served. The reason that the standard results which are surveyed by van Doorn and Pollett in \cite{vDP2013} cannot be applied here, is that the process at hand is not homogeneous in time. There are methods for dealing with stationary distributions of non-homogeneous Markov chains, such as the ones presented by Abramov and Lipster in \cite{AL2004}, which may yield more accurate analysis of the tail behaviour of the process of interest in this work. We leave these avenues for future research.

\subsection{Tardiness and index costs}\label{sec_general_tardiness}
Suppose now that customers may incur both tardiness and index costs; $\beta\geq 0$ and $\gamma\geq 0$. If early birds are allowed and there is no closing time, then analysis similar to the above yields:
\begin{theorem}\label{General_theorem_tardiness_F}
The equilibrium arrival distribution is given by:
\begin{equation}\label{General_f_negative_tardiness}
f(t)=\frac{\alpha\mu}{N(\alpha+\beta+\gamma\mu)} \ , \ t\in [t_a,0)
\end{equation}
and:
\begin{equation}\label{General_eq_dynamics_tardiness}
f(t)=\frac{\mu(\alpha+\beta)\left(1-\mathbbm{P}(Q(t)=0)\right)-\beta\mu}{N(\alpha+\beta+\gamma\mu)} \ , \ t\in[0,t_b],
\end{equation}
where $t_b<\infty$ is the upper bound of the arrival support. 
\end{theorem}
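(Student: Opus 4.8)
The plan is to generalize the derivation of Theorem \ref{General_theorem_F} to the case $\beta \geq 0$. The structure of the argument should parallel the $\beta=0$ case almost exactly, with two modifications: the cost function now carries the extra tardiness terms from \eqref{General_cost}, and the equilibrium support is now a finite interval $[t_a, t_b]$ rather than an infinite one (because the tardiness cost grows linearly in $t$, so arriving arbitrarily late is no longer a cheap option — this is the reason $t_b<\infty$, and it echoes the finite support of the pure-tardiness model in \eqref{JS_equil_f}). First I would write down the cost of arriving at a time $t<0$, where again no service has occurred so the queue size equals the number of arrivals, $\mathbbm{E}Q(t)=\mathbbm{E}A(t)=NF(t)$. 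From \eqref{General_cost} with $t<0$ the cost is
\begin{equation*}
c(t) = -\alpha t + \left(\frac{\alpha+\beta}{\mu}+\gamma\right)NF(t).
\end{equation*}
Setting $c(t)$ equal to the constant equilibrium cost $c_e$ and differentiating, the $-\alpha$ term must cancel the derivative of the congestion/index term, yielding $0 = -\alpha + (\frac{\alpha+\beta}{\mu}+\gamma)Nf(t)$, which rearranges to \eqref{General_f_negative_tardiness}. This recovers the uniform density before time zero with the denominator now enlarged by $\beta$.

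For $t \geq 0$ the plan is to use the same queueing identity that produced \eqref{General_cost_noTardiness}, namely that the expected number of departures by time $t$ is $\mu\int_0^t \mathbbm{P}(Q(s)>0)\,ds = \mu t - \mu\int_0^t \mathbbm{P}(Q(s)=0)\,ds$, so that $\mathbbm{E}Q(t) = NF(t) - \mu t + \mu\int_0^t \mathbbm{P}(Q(s)=0)\,ds$. Substituting this into \eqref{General_cost} for $t\geq 0$ (where $\mathbbm{1}_{\{t<0\}}=0$ and $\mathbbm{1}_{\{t\geq 0\}}=1$) gives
\begin{equation*}
c(t) = \frac{\alpha+\beta}{\mu}\left(NF(t)-\mu t + \mu\int_0^t \mathbbm{P}(Q(s)=0)\,ds\right) + \beta t + \gamma N F(t).
\end{equation*}
Differentiating and invoking the equilibrium condition $c'(t)=0$ gives
\begin{equation*}
0 = \frac{\alpha+\beta}{\mu}\left(Nf(t)-\mu+\mu\,\mathbbm{P}(Q(t)=0)\right)+\beta+\gamma N f(t).
\end{equation*}
Solving this linear relation for $f(t)$, the $-\mu$ and $+\mu\,\mathbbm{P}(Q(t)=0)$ terms combine into $-\mu(1-\mathbbm{P}(Q(t)=0))$, and after collecting the $f(t)$ coefficients as $N(\frac{\alpha+\beta}{\mu}+\gamma)=\frac{N(\alpha+\beta+\gamma\mu)}{\mu}$ one arrives at \eqref{General_eq_dynamics_tardiness}. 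I would note that setting $\beta=0$ recovers \eqref{General_eq_dynamics} as a consistency check.

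The one genuinely new point requiring care — and what I expect to be the main obstacle — is justifying the finiteness of $t_b$ and identifying it correctly, since the theorem asserts a finite support without giving its value explicitly. Here the argument is that for large $t$ the queue empties ($\mathbbm{P}(Q(t)=0)\to 1$), so the numerator of \eqref{General_eq_dynamics_tardiness} tends to $-\beta\mu<0$; since a density cannot be negative, $f$ must reach zero at some finite $t_b$, which is exactly where the right-hand side vanishes, i.e. where $\mu(\alpha+\beta)(1-\mathbbm{P}(Q(t_b)=0)) = \beta\mu$. Establishing that this crossing is unique (so that the support is a single interval and no "holes" appear) relies on Lemma \ref{Lemma_no_holes} together with the monotonicity of the emptying probability; I would remark that $t_b$ is then pinned down implicitly by the normalization $\int_{t_a}^{t_b} f = 1$ together with the requirement $f(t_b)=0$, analogously to how $t_a$ and $t_b$ were determined in the pure-tardiness two-customer solution \eqref{JS_equil_f}. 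The remaining verification — that the cost is indeed constant on the support and no lower outside it — is routine given the construction and the sign of $c'$ outside $[t_a,t_b]$.
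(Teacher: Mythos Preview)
Your proposal is correct and follows essentially the same approach as the paper, which merely states that ``analysis similar to the above yields'' the theorem and relies on the reader to redo the differentiation of \eqref{General_cost} with the extra $\beta$ terms; your write-up is in fact more explicit than the paper's. Two small remarks: the paper separates the claim $t_b<\infty$ into Lemma~\ref{General_lemma_tb}, arguing via the unboundedness of $\beta t$ rather than via the sign of the density formula (both work), and your invocation of Lemma~\ref{Lemma_no_holes} for ``no holes'' is slightly off, since that lemma rules out \emph{atoms} ($F(t)>F(t-)$), not gaps in the support---the single-interval property comes instead from the equilibrium cost being constant together with continuity of $\mathbbm{P}(Q(t)=0)$.
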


The positive part of the density in Theorem \ref{General_theorem_tardiness_F} is equivalent to:
\begin{equation}
f(t)\frac{N}{\mu}=\frac{\alpha}{\alpha+\beta+\gamma\mu}-\frac{\alpha+\beta}{\alpha+\beta+\gamma\mu}\mathbbm{P}(Q(t)=0) \ , \ t\in[0,t_b],
\end{equation} 
and in case $\gamma=0$ we get:
\begin{equation}
f(t)\frac{N}{\mu}=\frac{\alpha}{\alpha+\beta}-\mathbbm{P}(Q(t)=0) \ , \ t\in[0,t_b],
\end{equation}
which coincides with the result obtained in \cite{JS2012}. 

In the following lemma we provide equilibrium conditions for $t_b$ and argue that the support of the equilibrium distribution is indeed finite.
\begin{lemma}\label{General_lemma_tb}
If $\beta>0$ then there exists some finite time $t_b>0$ such that $F(t_b)=1$, $f(t_b)=0$ and $F(t)<1 \ , \ \forall t<t_b$.
\end{lemma}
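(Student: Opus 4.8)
The plan is to separate the claim into two parts: that the support is bounded above, and that the density vanishes at its right endpoint. Set $t_b:=\inf\{t:F(t)=1\}$. Since $F$ is non-decreasing, $F(t)<1$ for every $t<t_b$, which already gives the third assertion. Because the equilibrium profile has no atoms — the argument of Lemma \ref{Lemma_no_holes} transfers unchanged when $\beta>0$, the only new term $\beta t$ being continuous — $F$ is continuous, so once $t_b<\infty$ is established it follows that $F(t_b)=1$.

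For finiteness I would invoke that in equilibrium the cost equals a constant $c_e$ on the entire support, and use the explicit form of the cost for $t\ge 0$,
\[
c(t)=\frac{\alpha+\beta}{\mu}\mathbbm{E}Q(t)+\beta t+\gamma N F(t).
\]
As $\mathbbm{E}Q(t)\ge 0$ and $F(t)\ge 0$, this gives $c(t)\ge \beta t\to\infty$. If instead $t_b=\infty$, then $F(t)<1$ for all $t$; but a distribution with $F(t)<1$ for every finite $t$ cannot have support contained in any $(-\infty,M]$, so the support is unbounded above and contains arbitrarily large points at which $c$ equals $c_e$, contradicting $c(t)\to\infty$. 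Hence $t_b<\infty$.

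For $f(t_b)=0$ I would pin down $p_0(t):=\mathbbm{P}(Q(t)=0)$ at $t_b$ from two sides. On the interior of the support the density is given by \eqref{General_eq_dynamics_tardiness}, and non-negativity of $f$ forces $(\alpha+\beta)(1-p_0(t))\ge\beta$, i.e. $p_0(t)\le\frac{\alpha}{\alpha+\beta}$; letting $t\uparrow t_b$ and using continuity of $p_0$ yields $p_0(t_b)\le\frac{\alpha}{\alpha+\beta}$. For the opposite inequality I would use that arriving after $t_b$ cannot be profitable, so $c(t)\ge c_e=c(t_b)$ for $t>t_b$ and hence $c'(t_b^{+})\ge 0$. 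Beyond $t_b$ no further customers arrive and the queue drains as a pure-death process, so $\frac{d}{dt}\mathbbm{E}Q(t)=-\mu(1-p_0(t))$ and
\[
c'(t)=-(\alpha+\beta)(1-p_0(t))+\beta .
\]
Then $c'(t_b^{+})\ge 0$ gives $p_0(t_b)\ge\frac{\alpha}{\alpha+\beta}$. The two bounds force $p_0(t_b)=\frac{\alpha}{\alpha+\beta}$, and substituting into \eqref{General_eq_dynamics_tardiness} gives $f(t_b)=0$.

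The routine steps — constancy of the cost on the support, continuity of $p_0$, and the pure-death evolution of the queue past $t_b$ — are standard. I expect the smooth-fit step to be the main obstacle: one must justify the right-hand derivative of $c$ at $t_b$ and, in particular, rule out an atom of $F$ at $t_b$, since a mass point there would invalidate both the identification $c(t_b)=c_e$ through continuity of $\mathbbm{E}Q$ and the pure-death computation of $c'(t_b^{+})$. Once the absence of an atom at $t_b$ is secured, the two one-sided inequalities determine $p_0(t_b)$ and the vanishing of $f(t_b)$ follows.
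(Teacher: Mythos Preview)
Your proposal is correct and follows essentially the same approach as the paper. The finiteness argument is identical (the $\beta t$ term makes $c(t)\to\infty$), and for $f(t_b)=0$ the paper argues by contradiction---assuming $f(t_b)>0$ and using the interior equilibrium condition $c'(t_b)=0$ to deduce $c'(t_b^{+})=-f(t_b)\bigl(\tfrac{\alpha+\beta}{\mu}+\gamma\bigr)<0$---whereas you sandwich $p_0(t_b)$ between the bound $\le\tfrac{\alpha}{\alpha+\beta}$ from $f\ge0$ and the bound $\ge\tfrac{\alpha}{\alpha+\beta}$ from $c'(t_b^{+})\ge0$; these are the same derivative computations rearranged, and both rely on continuity of $p_0$ (which the paper also invokes, citing continuity of $F$).
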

\begin{proof}
Let us denote the equilibrium expected cost by $c_e$. This cost is clearly finite since $p_0\leq 1$ and $|t_a|<\infty$ (the latter stems from the fact that $F(0)<1$ \footnote{If $F(0)=1$ then there would be a downward discontinuity of the cost at time zero, contradicting the equilibrium assumption.}). If $\beta>0$, then the cost function $c(t)$ is clearly unbounded because the term $\beta t$ is not bounded and all the other terms are positive. Therefore, there must exist some time $t_b$ such that $F(t_b)=1$ and $F(t)<1 \ , \ \forall t<t_b$. To complete the proof we assume that $f(t_b)>0$ and verify that this leads to a contradiction of the equilibrium assumption. Consider the cost at any time $t>t_b$:
\begin{equation} \label{General_cost_tb}
c(t)=N\gamma+\frac{\alpha +\beta}{\mu}\mathbbm{E}Q(t)+\beta t.
\end{equation}
Recall also that for any $t\leq t_b$:
\begin{equation}\label{General_cost_derivative}
c'(t)=f(t)\left(\frac{\alpha+\beta}{\mu}+\gamma\right)-(\alpha+\beta)(1-\mathbbm{P}(Q(t)=0))+\beta,
\end{equation}
and by taking derivative of \eqref{General_cost_tb} for $t>t_b$ we have:
\begin{equation}
c'(t)=\frac{\alpha +\beta}{\mu}\frac{d}{dt}\mathbbm{E}Q(t)+\beta=-(\alpha+\beta)(1-\mathbbm{P}(Q(t)=0))+\beta.
\end{equation}
The last equation comes from the fact that when there are no arrivals the expected queue size is decreasing at rate $-\mu$ for as long as the server is busy. If $f(t_b)>0$, then from \eqref{General_cost_derivative} coupled with the equilibrium condition $c'(t)=0$ we can obtain:
\begin{equation}
(\alpha+\beta)(1-\mathbbm{P}(Q(t_b)=0))=f(t_b)\left(\frac{\alpha+\beta}{\mu}+\gamma\right)+\beta>\beta.
\end{equation}
If $F(t)$ is continuous then so is $\mathbbm{P}(Q(t)=0)$ (see for example \cite{JS2012}). So we can conclude that
\begin{equation}
c'(t_b+)=-f(t_b)\left(\frac{\alpha+\beta}{\mu}+\gamma\right)<0,
\end{equation}
which is a contradiction to the equilibrium assumption. \qed
\end{proof}
We can therefore conclude that the equilibrium solution is of the same functional form for both the tardiness cost model and the tardiness and index costs model. The key differences are that the index model has an infinite support, and the boundary condition is known (i.e. $t_a=-\frac{N\gamma}{\alpha}$). The equilibrium cost in the general case is larger than the cost in the index only case: $N\gamma$, since a customer that arrives at $t_b$ is last with probability one and there is a strictly positive probability that there are still customers in the system. We will derive a lower bound for both the individual equilibrium cost and for $-t_a$, which will be useful in the numerical computation and in the social utility analysis.
\begin{lemma}\label{General_lemma_bounds}
If we denote the individual equilibrium cost by $c_e$, then:
\begin{equation}\label{eq_ce_bound}
N\left(\frac{\beta}{\mu}+\gamma\right) < c_e < N\left(\frac{\alpha+\beta}{\mu}+\gamma\right),
\end{equation}
and consequently:
\begin{equation}\label{eq_ta_bound}
-\frac{N(\alpha+\beta+\gamma\mu)}{\alpha\mu} < t_a < -\frac{N\left(\beta+\gamma\mu\right)}{\alpha\mu}.
\end{equation}
\end{lemma}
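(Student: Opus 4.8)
The plan is to reduce the entire lemma to proving the cost bounds \eqref{eq_ce_bound} and then obtain \eqref{eq_ta_bound} for free. The reduction rests on the identity $c_e = -\alpha t_a$. Indeed, by Corollary-type reasoning the support is an interval $[t_a,t_b]$ with $t_a<0<t_b$ and the cost \eqref{General_cost} is constant and equal to $c_e$ throughout; evaluating it at the left endpoint, where $F(t_a)=0$ and hence $\mathbbm{E}Q(t_a)=NF(t_a)=0$, gives $c_e=c(t_a)=-\alpha t_a$. Dividing the chain $N\left(\frac{\beta}{\mu}+\gamma\right)<c_e<N\left(\frac{\alpha+\beta}{\mu}+\gamma\right)$ by $-\alpha$ (which reverses the inequalities) and simplifying $\frac{N(\frac{\alpha+\beta}{\mu}+\gamma)}{\alpha}=\frac{N(\alpha+\beta+\gamma\mu)}{\alpha\mu}$ and $\frac{N(\frac{\beta}{\mu}+\gamma)}{\alpha}=\frac{N(\beta+\gamma\mu)}{\alpha\mu}$ then yields \eqref{eq_ta_bound} verbatim. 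So I only need \eqref{eq_ce_bound}.

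For the upper bound I would evaluate the cost at time zero. No service occurs before opening, so $\mathbbm{E}Q(0)=NF(0)$ and $c_e=c(0)=\left(\frac{\alpha+\beta}{\mu}+\gamma\right)NF(0)$. By Lemma \ref{General_lemma_tb} we have $t_b>0$ and $F(t)<1$ for all $t<t_b$, hence $F(0)<1$ strictly, and the bound $c_e<N\left(\frac{\alpha+\beta}{\mu}+\gamma\right)$ follows immediately.

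The lower bound is the substantive part, and I would evaluate the cost at the right endpoint $t_b$. There $F(t_b)=1$, so as in \eqref{General_cost_tb} we have $c_e=c(t_b)=\frac{\alpha+\beta}{\mu}\mathbbm{E}Q(t_b)+\beta t_b+\gamma N$. After cancelling the common $\gamma N$, the claimed inequality $c_e>N\left(\frac{\beta}{\mu}+\gamma\right)$ is equivalent to $\frac{\alpha+\beta}{\mu}\mathbbm{E}Q(t_b)+\beta t_b>\frac{N\beta}{\mu}$. The key tool is the flow-balance identity
\begin{equation*}
\mathbbm{E}Q(t_b)=NF(t_b)-\mu\int_0^{t_b}\left(1-\mathbbm{P}(Q(s)=0)\right)ds=N-\mu\int_0^{t_b}\left(1-\mathbbm{P}(Q(s)=0)\right)ds,
\end{equation*}
expressing expected queue length as expected arrivals minus expected departures. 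Bounding the busy-time integral crudely by $\int_0^{t_b}(1-\mathbbm{P}(Q(s)=0))\,ds\le t_b$ gives $N-\mathbbm{E}Q(t_b)\le\mu t_b$, so $\beta t_b\ge\frac{N\beta}{\mu}-\frac{\beta}{\mu}\mathbbm{E}Q(t_b)$. Substituting this into the left-hand side collapses the $\beta$-terms and leaves $\frac{\alpha+\beta}{\mu}\mathbbm{E}Q(t_b)+\beta t_b\ge\frac{N\beta}{\mu}+\frac{\alpha}{\mu}\mathbbm{E}Q(t_b)$, reducing everything to showing $\mathbbm{E}Q(t_b)>0$.

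The one genuine obstacle is therefore establishing $\mathbbm{E}Q(t_b)>0$ strictly — the observation already flagged in the remark preceding this lemma that a customer arriving at $t_b$ is behind everyone yet still faces a positive residual queue. I would argue that since the density is positive throughout $(t_a,t_b)$ by Lemma \ref{Lemma_no_holes} and service times are exponential, there is strictly positive probability that a customer arriving shortly before $t_b$ has not completed service by $t_b$; hence $\mathbbm{P}(Q(t_b)=0)<1$ and $\mathbbm{E}Q(t_b)>0$. Together with $\alpha>0$ this makes the final inequality strict, completing the lower bound and the lemma. (Note that the strictness of the lower bound is exactly where $\beta>0$, which guarantees a finite $t_b$ via Lemma \ref{General_lemma_tb}, is used; for $\beta=0$ one recovers the exact value $c_e=N\gamma$.)
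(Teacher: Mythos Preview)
Your proof is correct and, for the lower bound, takes a genuinely different route from the paper's. The paper argues by contradiction: assuming $c_e\le N\bigl(\tfrac{\beta}{\mu}+\gamma\bigr)$, it plugs the explicit density formulas \eqref{General_f_negative_tardiness}--\eqref{General_eq_dynamics_tardiness} into $F\bigl(\tfrac{N}{\mu}\bigr)=F(0)+\int_0^{N/\mu}f(t)\,dt$ and shows this is strictly less than one, forcing $t_b>\tfrac{N}{\mu}$; then $c(t_b)\ge \beta t_b+\gamma N>\tfrac{N\beta}{\mu}+\gamma N$, contradicting the assumption. You instead work directly at $t_b$ via the flow-balance identity $\mathbbm{E}Q(t_b)=N-\mu\int_0^{t_b}(1-\mathbbm{P}(Q(s)=0))\,ds$, bound the busy-time integral by $t_b$, and reduce the claim to $\mathbbm{E}Q(t_b)>0$. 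Your approach is more self-contained (it does not unpack the density formulas) and avoids the contradiction structure; the paper's approach, on the other hand, extracts the quantitative by-product $t_b>\tfrac{N}{\mu}$. For the upper bound the two arguments are essentially the same observation viewed from different angles: the paper says the bound is the cost of arriving last behind a full queue, while you evaluate $c(0)=N F(0)\bigl(\tfrac{\alpha+\beta}{\mu}+\gamma\bigr)$ and use $F(0)<1$; both are immediate. One small remark: your appeal to Lemma~\ref{Lemma_no_holes} for ``positive density'' is slightly off (that lemma rules out atoms, not gaps, and is stated for $\beta=0$), but your actual argument for $\mathbbm{E}Q(t_b)>0$ only needs $F(t)<1$ for $t<t_b$, which Lemma~\ref{General_lemma_tb} supplies---or, even more directly, $f(t_b)=0$ together with \eqref{General_eq_dynamics_tardiness} gives $\mathbbm{P}(Q(t_b)=0)=\tfrac{\alpha}{\alpha+\beta}<1$.
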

\begin{proof}
The RHS of \eqref{eq_ce_bound} (and the LHS of \eqref{eq_ta_bound}) is trivially true for it is the cost of a customer arriving last and facing a full queue. Further the inequality is strict because assuming an equality will lead to an immediate contradiction to the equilibrium assumption. This is clearly a coarse bound, which we only use in order to bound the search range in the numerical procedure presented in the next section.

The LHS of \eqref{eq_ce_bound} is of more interest and requires a more technical proof, which follows similar steps as the proof of Lemma 10 in \cite{JS2012}, where a respective bound is established for the model with no index costs. 

We will assume that $c_e\leq N\left(\frac{\beta}{\mu}+\gamma\right)$ (or $t_a\geq -\frac{N\left(\beta+\gamma\mu\right)}{\alpha\mu}$), and show that this leads to a contradiction. We first show that this assumption leads to the conclusion that $t_b>\frac{N}{\mu}$. The characterization of the equilibrium arrival distribution in \eqref{General_f_negative_tardiness} and \eqref{General_eq_dynamics_tardiness} yields:
\begin{equation*}
\begin{split}
F\left(\frac{N}{\mu}\right) &= F(0)+\int_0^{\frac{N}{\mu}}f(t)dt \\
&= -t_a\frac{\alpha\mu}{N(\alpha+\beta+\gamma\mu)}+\int_0^{\frac{N}{\mu}}\frac{\mu(\alpha+\beta)\left(1-\mathbbm{P}(Q(t)=0)\right)-\beta\mu}{N(\alpha+\beta+\gamma\mu)}dt \\
&< \frac{\beta+\gamma\mu}{\alpha+\beta+\gamma\mu}+\int_0^{\frac{N}{\mu}}\frac{\alpha\mu}{N(\alpha+\beta+\gamma\mu)}dt=1 
\end{split}.
\end{equation*}
Hence, $F\left(\frac{N}{\mu}\right)<1$ which implies that $t_b>\frac{N}{\mu}$. Recall that the equilibrium cost needs to be constant on all of the support, and in particular $c(t_b)=c_e\leq N\left(\frac{\beta}{\mu}+\gamma\right)$. But if $t_b>\frac{N}{\mu}$, then  according to \eqref{General_cost} this is not satisfied:
\begin{equation*}
c(t_b)=\frac{\alpha+\beta}{\mu}\mathbbm{E}Q(t_b)+\beta t_b+\gamma N > \beta\frac{N}{\mu}+\gamma N \geq c_e,
\end{equation*}
and thus we have reached a contradiction.
\qed
\end{proof}

\subsection{Numerical procedure}\label{sec_general_numeric}
We can now present a numerical method to compute the equilibrium distribution in the general setting, with no closing time or early birds (both will be addressed later):
\begin{itemize}
\item[(1)] Choose an arbitrary $t_a \in [-\frac{N(\alpha+\beta+\gamma\mu)}{\alpha\mu},-\frac{N\left(\beta+\gamma\mu\right)}{\alpha\mu}]$ (Lemma \ref{General_lemma_bounds})
\item[(2)] Sequentially compute $F(t)$ for $t\geq t_a$ discretely according to the dynamics given by \eqref{General_f_negative_tardiness}, \eqref{General_eq_dynamics_tardiness}:
\begin{equation*}
f(t)= \left\{
	\begin{array}{ll}
		 \frac{\alpha\mu}{N(\alpha+\beta+\gamma\mu)} \mbox{, } &  t\in [t_a,0) \\
		\frac{\mu(\alpha+\beta)\left(1-\mathbbm{P}(Q(t)=0)\right)-\beta\mu}{N(\alpha+\beta+\gamma\mu)} \mbox{, } &  t\in[0,t_b]		
	\end{array}
\right. .
\end{equation*}
The approximation step of $\mathbbm{P}(Q(t)=0)$ from $t$ to $t+\Delta$ is given by the Kolmogorov equations in \eqref{General_Q_dynamics}:
\begin{equation*}
\begin{aligned}
	p_{0,j}(t+\Delta) &= p_{0,j}(t)+\Delta\mu p_{1,j}(t)-\Delta(N-j)h(t)p_{0,j}(t), \ 1\leq j\leq N  \\
	p_{i,j}(t+\Delta) &= p_{i,j}(t)+\Delta\mu p_{i+1,j}(t)+\Delta(N-j+1)h(t)p_{i-1,j-1}(t)    \\
				   &= -\Delta(\mu+(N-j)h(t))p_{i,j}(t), \ 1\leq i\leq j\leq N
\end{aligned},
\end{equation*}
where $\Delta>0$ is the discretization parameter. The initial conditions are given by \eqref{General_init_conditions}:
\begin{equation*}
p_{i,j}(0)={N \choose i} F(0)^i (1-F(0))^{N-i} \mathbbm{1}_{\lbrace i=j\rbrace},
\end{equation*}
and $F(0)=-t_a\frac{\alpha\mu}{N(\alpha+\beta+\gamma\mu)}$.
\item[(3)] Stop at the first $t$ such that either of these two conditions (Lemma \ref{General_lemma_tb}) is met, for some tolerance parameter $\epsilon>0$:
\begin{itemize}
\item[(a)] $|F(t)-1|\leq\epsilon$
\item[(b)] $|f(t)-0|\leq\epsilon$
\end{itemize}
\item[(4)] If both are met simultaneously, stop and set $t_b=t$.
\item[(5)] If (a), then chose a smaller $t_a$ and go back to (2).
\item[(6)] If (b), then chose a larger $t_a$ and go back to (2).
\end{itemize}
\begin{remark}[Remark 1]
The solution $f(t)$ of \eqref{General_eq_dynamics} was proven to be monotonic with respect to the initial condition $t_a$ in \cite{JS2012}. This is sufficient for the above procedure to converge when a simple bisection procedure is used.
\end{remark}
\begin{remark}[Remark 2]
If early birds are not allowed the procedure can be adjusted by running the search procedure on $p_0$. The discrete computation will now start at $t_e$ (computed using equation \eqref{General_te_equation}) with initial conditions:
\begin{equation}
p_{i,j}(t_e)={N \choose j} p_0^j (1-p_0)^{N-j} \frac{e^{-\mu t_e}(\mu t_e)^{j-i}}{(j-i)!} \mathbbm{1}_{\lbrace j\geq i\rbrace}.
\end{equation}
\end{remark}
\begin{remark}[Remark 3]
An additional adjustment can be made if we assume closing time $T<\infty$, such that no arrivals are admitted into the queue after this time. If $t_b\leq T$, then no adjustment is required. Otherwise, the conditions in (3) can be replaced by $F(T)=1$.
\end{remark}
\subsubsection{Computational advantage of the index model}\label{sec_general_complex}
Suppose that there is no tardiness cost and no closing time, then $F(t)<1$ for any $t<\infty$. This can be addressed by adding a tolerance parameter $\epsilon>0$, and adjusting the stopping rule in (3) to $F(t)=1-\epsilon$. We denote this time by $t_\epsilon:=\{t:F(t)=1-\epsilon\}$. We next derive the computational complexity for this case and explain why it is lower than in the general model.
\begin{lemma}\label{Lemma_complexity}
If $\beta=0$ and $T=\infty$, then the computational complexity of the procedure for $N$ customers, with parameters $\Delta$ and $\epsilon$ is $\mathcal{O}\left(\frac{-\log(\epsilon)N^2}{2\Delta}\right)$.
\end{lemma}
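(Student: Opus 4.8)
The plan is to count the total number of elementary arithmetic operations performed by the forward Kolmogorov sweep prescribed in the procedure, and to show that the dominant contribution factorizes into a per-step cost of order $N^2/2$ and a step count of order $-\log(\epsilon)/\Delta$.

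First I would observe that when $\beta=0$ and $T=\infty$ the search over $t_a$ collapses entirely: by Theorem \ref{General_theorem_noT} the left endpoint is known exactly, $t_a=-N\gamma/\alpha$, so no bisection is required and the procedure reduces to a single forward sweep. I would then split this sweep at time zero. On $[t_a,0)$ the density is the constant $\frac{\alpha\mu}{N(\alpha+\gamma\mu)}$, and $F(0)$ together with the initial probabilities \eqref{General_init_conditions} are available in closed form, so this part costs $\mathcal{O}(1)$ (or at most $\mathcal{O}(N/\Delta)$ if one insists on discretizing it), which is subdominant. The genuinely expensive part is the integration of the Kolmogorov system \eqref{General_Q_dynamics} on $[0,t_\epsilon]$, where $t_\epsilon$ is the first time that $F(t)=1-\epsilon$.

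Next I would bound the per-step cost. Each Euler step advances the probabilities $p_{i,j}(t)$ over the state space $\{(i,j):0\le i\le j\le N\}$, whose cardinality is $\sum_{j=0}^{N}(j+1)=\frac{(N+1)(N+2)}{2}\sim\frac{N^2}{2}$; each update in \eqref{General_Q_dynamics} uses a fixed number of multiplications and additions, so the per-step cost is $\Theta(N^2/2)$. Reading off $\mathbbm{P}(Q(t)=0)=\sum_{j}p_{0,j}(t)$ and advancing $F$ via \eqref{General_eq_dynamics} adds only $\mathcal{O}(N)$ per step, which is absorbed into the leading term. The step count is $t_\epsilon/\Delta$, so the crux is to show $t_\epsilon=\mathcal{O}(-\log\epsilon)$. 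Here I would invoke Lemma \ref{General_lemma_exponential_tail}: since $e^{\eta t}(1-F(t))$ converges to a finite positive constant for some $\eta\in(0,\mu)$, there exist constants $K>0$ and $\eta>0$, depending only on $\alpha,\gamma,\mu$ and not on $\epsilon$ or $\Delta$, with $1-F(t)\le Ke^{-\eta t}$ for all large $t$. Solving $Ke^{-\eta t_\epsilon}=\epsilon$ yields $t_\epsilon\le\frac{\log K-\log\epsilon}{\eta}=\mathcal{O}(-\log\epsilon)$. Multiplying the per-step cost by the step count then gives $\mathcal{O}\!\left(\frac{-\log(\epsilon)N^2}{2\Delta}\right)$, with the negative-time and post-processing costs of strictly lower order.

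The main obstacle is precisely this tail estimate for $t_\epsilon$: one must ensure that the decay rate $\eta$ furnished by Lemma \ref{General_lemma_exponential_tail} is bounded away from zero by a constant that does not depend on $\epsilon$ or $\Delta$, so that it is legitimately absorbed into the $\mathcal{O}(\cdot)$ constant. If $\eta$ were permitted to degrade (the lemma only asserts the existence of some $\eta<\mu$, and the sharper identity $\eta=F(0)\mu$ is merely conjectured), the step count would acquire an additional factor. Treating the model parameters $\alpha,\gamma,\mu$ as fixed and $\eta$ as the resulting positive constant is exactly what makes the clean $N^2$ scaling hold.
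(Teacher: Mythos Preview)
Your argument is correct and follows essentially the same route as the paper: you use the fact that the initial condition is explicit when $\beta=0$ and $T=\infty$ to eliminate the bisection, count the $\frac{(N+1)(N+2)}{2}$ states updated per Euler step, and invoke Lemma~\ref{General_lemma_exponential_tail} to bound $t_\epsilon$ by $\mathcal{O}(-\log\epsilon)$. Your additional remarks on the negative-time contribution being subdominant and on $\eta$ being a fixed constant of the model parameters are welcome clarifications but do not depart from the paper's proof.
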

\begin{proof}
If $\beta=0$ and $T=\infty$, then the initial conditions are known: if early birds are allowed then $t_a=-\frac{N\gamma}{\alpha}$, and if they are not then:
\begin{equation*}
p_0 =
\left\{
	\begin{array}{ll}
		 \frac{\gamma}{\gamma+\frac{\alpha}{\mu}} & \mbox{, } \frac{\alpha}{\mu}>\gamma  \\
		1 & \mbox{, }  \frac{\alpha}{\mu}\leq\gamma
	\end{array}
\right. .
\end{equation*}
Hence, no bisection is needed and there is only one iteration of the algorithm. 

We now compute the number of approximation steps required in one iteration. The number of partitions the interval $[0,t_\epsilon]$ in the discretization is $\frac{t_\epsilon+1}{\Delta}$. In each step the probability of all states of the process $\{Q(t),A(t)\}$ is approximated. The size of the state space is 
\begin{equation}
\left|\{(i,j):0\leq i\leq N, \ i\leq j\leq N\}\right|=\frac{(N+1)(N+2)}{2}.
\end{equation}
We can conclude that the total number of approximations in the procedure is $\frac{(t_\epsilon+1)(N+1)(N+2)}{2\Delta}$. Finally, in Lemma \ref{General_lemma_exponential_tail} we showed that the $F$ has an exponentially decreasing tail, with a rate of $0<\eta<\mu$. Thus for small $\epsilon$ we can approximate $t_\epsilon$ by solving the equation:
\begin{equation}
1-F(t_\epsilon)=e^{-\eta t},
\end{equation}
yielding $t_\epsilon\sim -\log(\epsilon)$.
\qed
\end{proof}

If we assume tardiness costs $\beta>0$ or a closing time $T<\infty$ (or both), then the initial conditions are not known and there is a need for a bisection search for $t_a$ (or equivalently, for $p_0$). The complexity of each iteration in the search is given by Lemma \ref{Lemma_complexity}: $\mathcal{O}\left(\frac{t_b N^2}{2\Delta}\right)$ if $\beta>0$ and $\mathcal{O}\left(\frac{T N^2}{2\Delta}\right)$ if $\beta=0$. Note that the number of required iterations for the bisection to converge is a function the tolerance parameter $\epsilon$, and so is the approximated value of $t_b$ in case $\beta>0$. In both cases the computational resources required in each iteration is of similar magnitude as in the index only case, but multiple (possibly many) iterations are required. 

\subsection{Numerical examples}\label{sec_general_examples}
We applied the numerical procedure presented above in order to compute the equilibrium arrival distribution for several examples. First of all for the case with no tardiness cost ($\beta=0$), in Figure \ref{fig:figure_general_noBeta} we see four arrival densities, for different population sizes. In all four examples the service rate is $\mu=20$ and the waiting cost is $\alpha=0.1$. By keeping the product $N\gamma$ constant at a value of one, in all of the examples, we also keep the individual equilibrium cost constant (and equal to one). This also implies that the lower bound of the support remains constant at $t_a=-\frac{N\gamma}{\alpha}=10$. The only change is the shape of the distribution which becomes more "spread out" as the population size increases. It is evident that for a larger number of customers the decrease in the arrival rate is not exponential at first, but rather starts moderately and is only exponential in the tail. 
\begin{figure}[H]
\begin{center}
\includegraphics[width=13cm,height=4.7cm	]{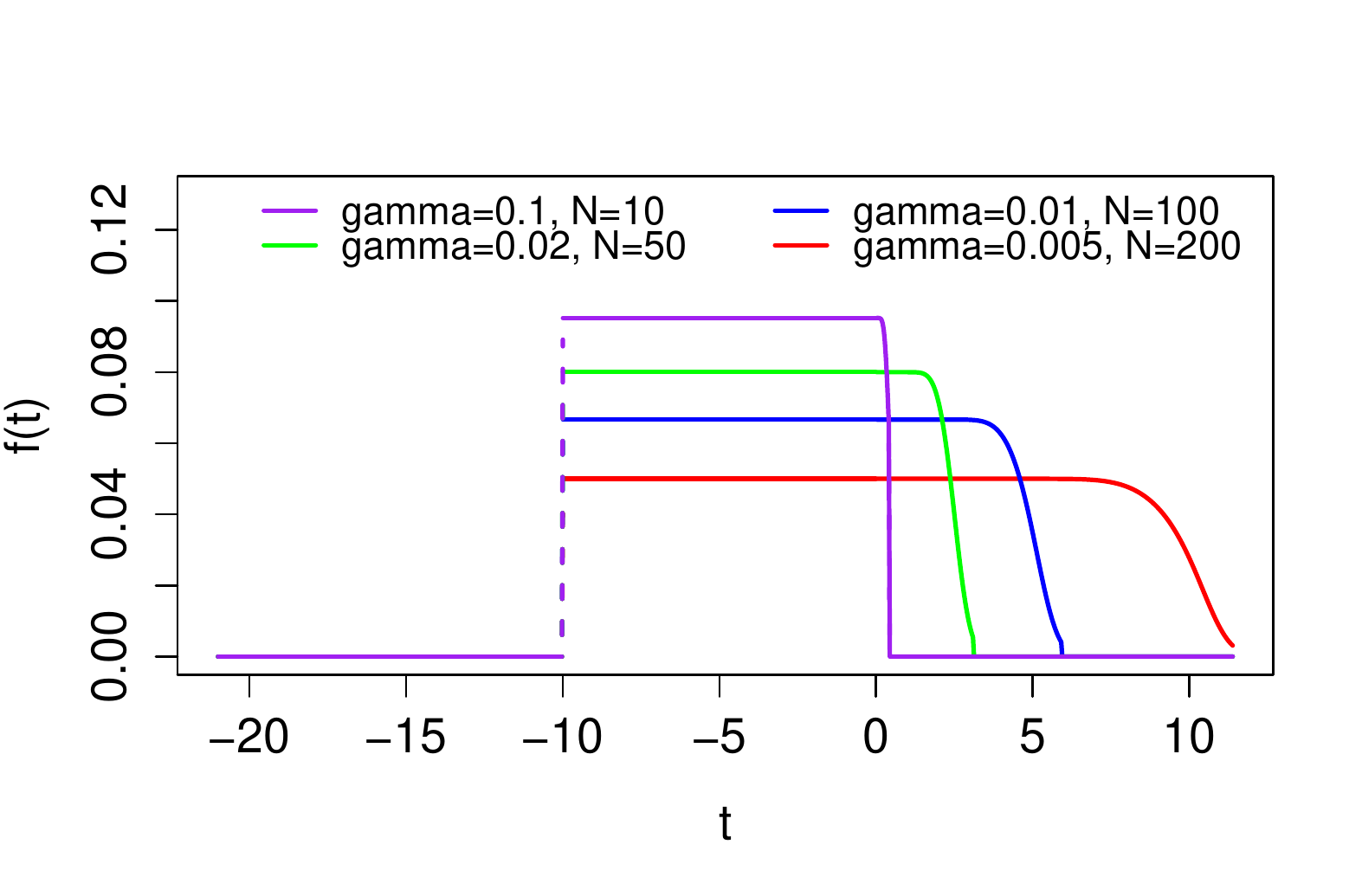}
\end{center}
\caption{Equilibrium arrival density \ $\left(\mu=20, \ \alpha=0.1, \ \beta=0 \right)$}
\label{fig:figure_general_noBeta}
\end{figure}

In figure \ref{fig:figure_general_tail} we show the tail behaviour of the hazard rate for increasing population sizes when all other parameters are kept constant. For $N>1$ the hazard rate is not constant, but as we have shown in Lemma \ref{General_lemma_exponential_tail} it approaches a constant rate. The numerical examples suggest that this constant rate is the same as in the case of $N=1$, which equals $\mu F(0)=\frac{\mu}{1+\frac{\alpha}{\gamma\mu}}$. In other words, the tail behaviour of the arrival distribution is dependent on the cost parameters, but no on the population size.
\begin{figure}[H]
\begin{center}
\includegraphics[width=13cm,height=4.7cm	]{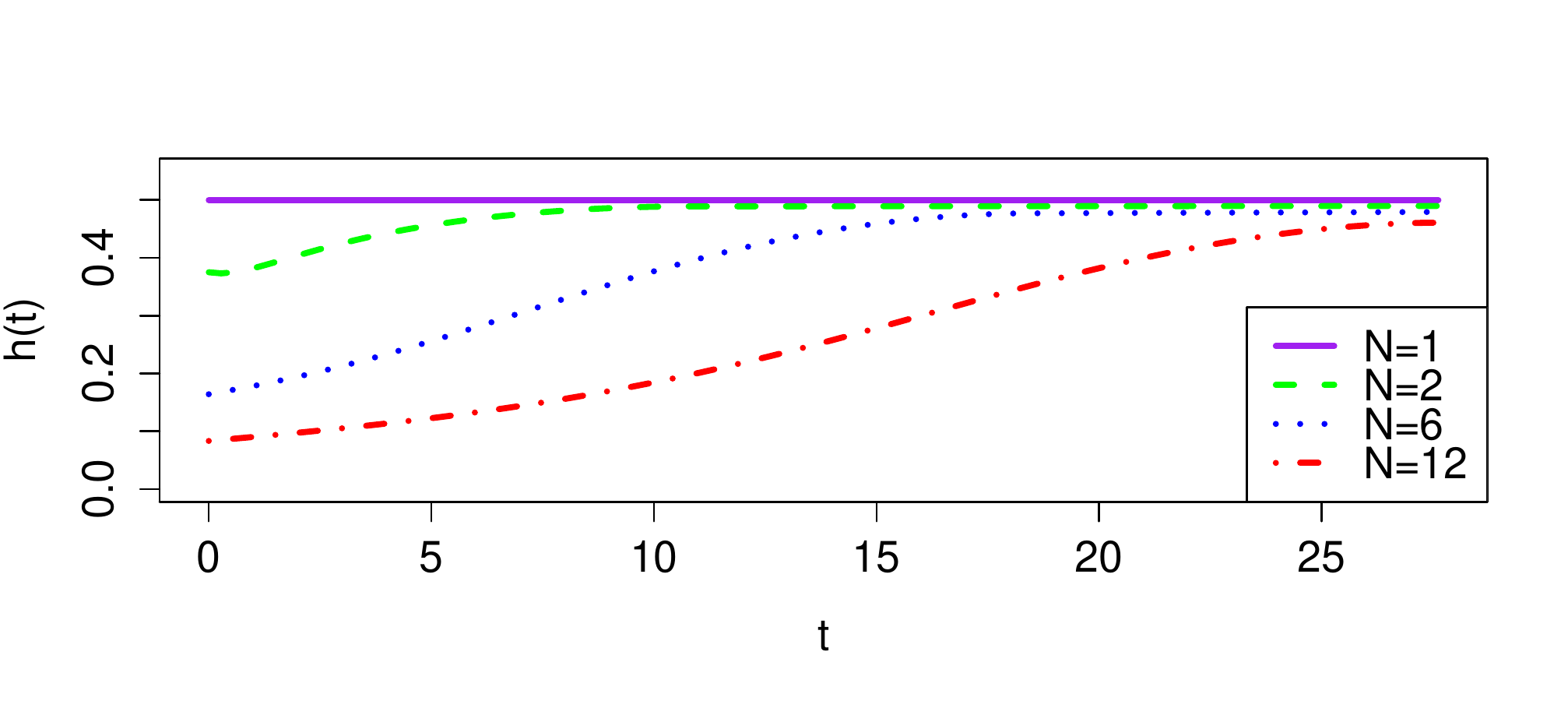}
\end{center}
\caption{Hazard rate for different population sizes \ $\left(\mu=1, \ \alpha=1, \ \gamma=1 \right)$}
\label{fig:figure_general_tail}
\end{figure}

The next examples were computed for the same sets of parameters as in figure \ref{fig:figure_general_noBeta}, with an addition of a tardiness cost: $\beta=0.1$. These computations are shown in Figure \ref{fig:figure_general_Beta}. In this case, the support is finite: $[t_a,t_b]$. Increasing the population now makes the support "spread out" in both directions, with customers starting to arrive earlier, and with a more moderate decrease after time zero.
\begin{figure}[H]
\begin{center}
\includegraphics[width=13cm,height=4.7cm	]{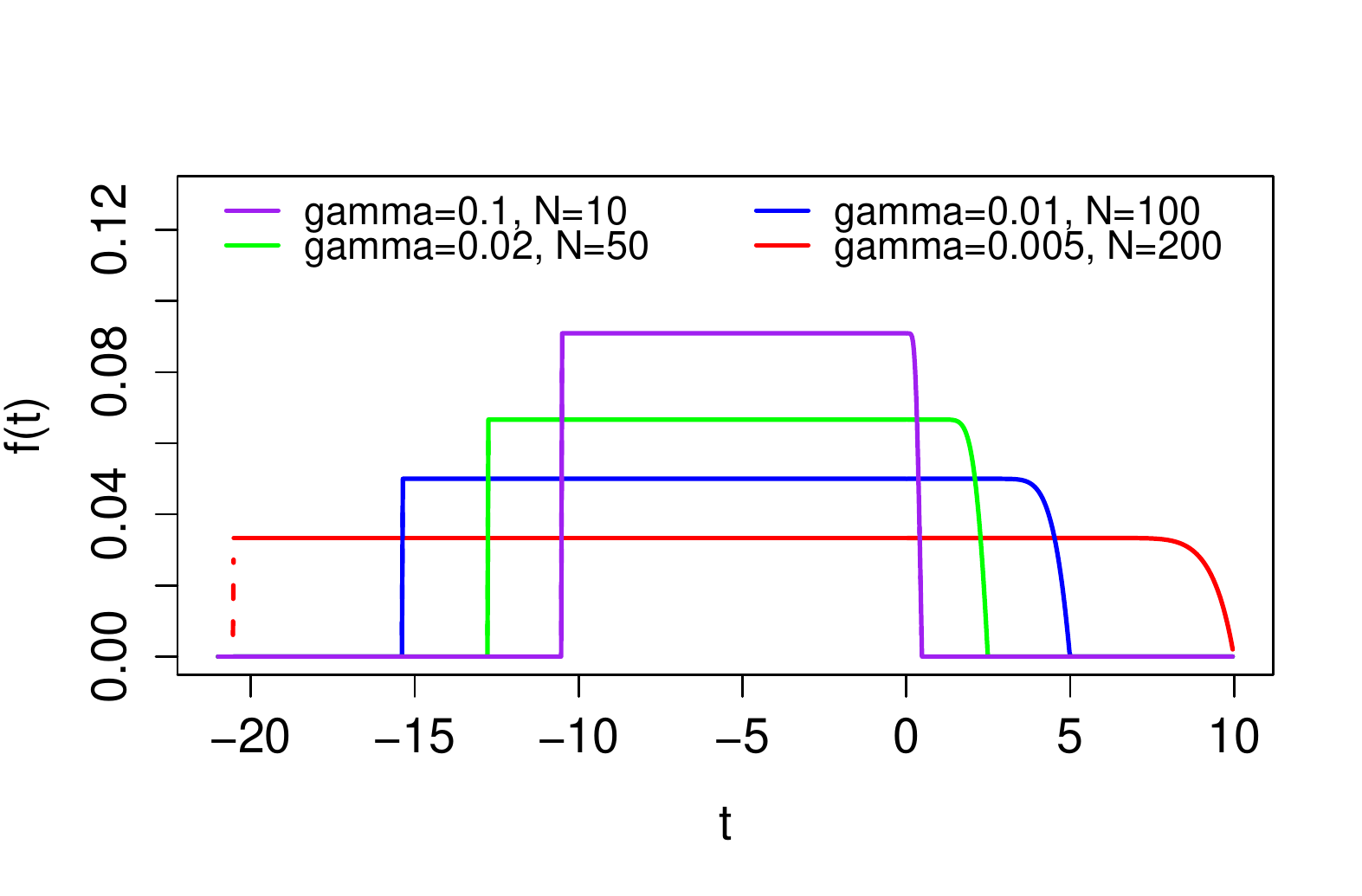}
\end{center}
\caption{Equilibrium arrival density \ $\left(\mu=20, \ \alpha=0.1, \ \beta=0.1 \right)$}
\label{fig:figure_general_Beta}
\end{figure}

\section{Poisson number of customers}\label{sec_poisson}
A common model assumption in queueing arrival games is that the number of customers arriving to the system follows a Poisson distribution with some parameter $\lambda$. Although this assumption might seem to make the model more complicated, it actually simplifies quite a few of the calculations. Namely, if each customer believes that the number of other customers is $N\sim Poisson(\lambda)$ \footnote{It turns out this is consistent with assuming the overall number of customers $N+1$ is of the same distribution. Moreover, this phenomenom only holds for Poisson random variables. See for example McAfee and McMillan \cite{MM1987} or Haviv and Milchtaich \cite{HM2012}.} and that they all arrive according to the same distribution $F$, then the arrival process $A(t)$ is a non-homogeneous in time Poisson process with rate $\lambda f(t)$. The independent increments then imply that the number of customers yet to arrive at any given time does not depend on the number of customers that have already arrived. We assume a linear cost function with $\alpha>0$ and $\beta,\gamma\geq 0$, no closing time and that early birds are allowed. Analysis very similar to section \ref{sec_general}, leads to the following equilibrium arrival distribution:

\begin{equation} \label{Poiss_equil_density}
f(t) =
\left\{
	\begin{array}{ll}
		\frac{\alpha\mu}{\lambda(\alpha+\beta+\gamma\mu)} \  & \mbox{, } \ t\in [t_a,0) \\
		\frac{\mu(\alpha+\beta)\left(1-p_0(t)\right)-\beta\mu}{\lambda(\alpha+\beta+\gamma\mu)} & \mbox{, } \ t\in [0,t_b] \\
		0  & \mbox{, } o.w.
	\end{array}
\right. ,
\end{equation}
such that $\int_{t_a}^{t_b} f(t)dt=1$ , $f(t_b)=0$, and $p_i(t)=\mathbbm{P}(Q(t)=i)$ satisfy:
\begin{equation} \label{Poisson_queue_dynamics}
p_{i}^{'}(t)=
\left\{
	\begin{array}{ll}
		\mu p_{1}(t))-\lambda f(t)p_{0}(t) \ & \mbox{, } i=0 \\
		\mu p_{i+1}(t)+\lambda f(t)p_{i-1}(t)-(\mu+\lambda f(t))p_{i}(t) \ & \mbox{, }  i>0
	\end{array}
\right. .
\end{equation}
Unsurprisingly, the equilibrium solution is of the same functional form as in \cite{GH1983} and \cite{H2013}. If $\beta=0$, the arguments of Lemma \ref{Lemma_index_no_tb} still hold. To be specific, any customer can ensure an expected cost of no more than the expected number of customers times the order penalty. Therefore, the equilibrium cost is $\lambda\gamma$ and the arrival support is $[-\frac{\lambda\gamma}{\alpha},\infty)$. This means that the computational advantages discussed in the precious section are still valid in the Poisson case.

\section{Social optimization and Price of Anarchy}\label{sec_social}
When considering the goal of minimizing the total cost inured by the customers, thus maximizing social welfare, then the inclusion of index costs has no impact on the optimization problem: The order of admittances does not matter because there is always a first customer, last customer and so on. If there are $N+1$ customers, then the total cost of the order penalties is $\frac{\gamma N(N+1)}{2}$, regardless of the actual order. Including this cost in the definition of the price of anarchy yields:
\begin{equation}\label{eq_PoA}
PoA:=\frac{(N+1)c_e}{\frac{\gamma N(N+1)}{2}+c_{opt}},
\end{equation}
where $c_{opt}$ is the total socially optimal expected waiting and tardiness cost. We will review two examples of social optimization for this model, the first a scheduling policy when there are no tardiness costs and the second a dynamic policy for the general model.

\subsection{No tardiness costs}\label{sec_social_notardiness}
If there are no tardiness costs, then the socially optimal schedule is one that minimizes the expected queue size at arrival times. This problem is only relevant when there is a closing time $T$, for otherwise a central planner could "spread out" the customers as much he wants and achieve a cost of zero. 
\subsubsection{Example: $N+1=3$}
If $N=3$ and $T=1$ it can be verified that scheduling one arrival at zero, another at $T$ and a third at
\begin{equation}
t^{*} =
\left\{
	\begin{array}{ll}
		\frac{1}{\mu}\log\left(\frac{1+\sqrt{1+4e^{\mu}}}{2}\right)  & \mbox{, } \frac{1}{\mu}\log\left(\frac{1+\sqrt{1+4e^{\mu}}}{2}\right) \leq 1 \\
		1 & \mbox{, } o.w.
	\end{array}
\right.,
\end{equation}
is the socially optimal schedule. And the total socially optimal waiting cost is:
\begin{equation} \label{social_example}
c_{opt}=\alpha\left(e^{-\mu t^{*}}+ e^{-\mu}(1+\mu-\mu t^{*}+e^{\mu t^{*}})\right)+3\gamma.
\end{equation}
Table \ref{tbl_poa_N3} shows exact computations of the PoA for several model parameters. When the index cost parameter is large, relative to the waiting cost parameter then the price of anarchy is lower, as expected since the central planner has less impact on the total cost. Moreover, the $PoA$ can be very close to one when the index parameter is the dominant factor in the cost function.

\begin{table}[H]
\centering
\caption{Price anarchy computations when $N+1=3$ and $\alpha=1$.}
\begin{tabular}{|c|c|c|c|} \hline 
$PoA$ & $\gamma=0.05$ & $\gamma=1$ & $\gamma=5$  \\ \hline
$\mu=0.5$ & 4.025 & 2.909 & 2.273 \\ \hline
$\mu=1$ & 2.079 & 2.002 & 1.995 \\ \hline
$\mu=2$ & 1.266 & 1.731 & 1.928\\ \hline 
\end{tabular}
\label{tbl_poa_N3}
\end{table}

\subsubsection{General number of customers}
For a general number of customers, finding an optimal schedule for this problem is a hard global optimization problem, and typically can only be solved using heuristic or approximation algorithms. Examples of such algorithms can be found in Pedgen and Rosenshine \cite{PR1990} or Stein et al. \cite{SCM1994}. More recently Hassin and Mendel \cite{HM2008} presented an optimization procedure in the context of patient scheduling, when some proportion of the customers do not show up. An alternative formulation of the social optimization problem is suggested in \cite{HK2010}: The central planner may not specify individual arrival times to the customers, but rather a distribution for all customers to randomly draw their arrivals from. They provide an approximation algorithm for finding the socially optimal distribution which indicates that the optimal solution is approximately a uniform distribution.

\subsection{Tardiness and index costs}\label{sec_social_general}
A strict lower bound of 2 for the PoA was established in \cite{JS2012}, along with convergence to the bound when the number of customers increases. They do this by comparing the equilibrium cost to an optimal schedule where the central planner observes the system and can send a new customer each time a service is completed. This dynamic policy is still optimal when index costs are included and then by definition \eqref{eq_PoA} we get:
\begin{equation}\label{PoA_no_tardiness}
PoA=\frac{(N+1)c_e}{\frac{\gamma N(N+1)}{2}+\frac{\beta N(N+1)}{2\mu}}.
\end{equation}
In Lemma \ref{General_lemma_bounds} we showed that $c_e>N\left(\frac{\beta}{\mu}+\gamma\right)$. This leads directly to the following lemma:
\begin{lemma}
In the general cost model, if the central planner can schedule arrivals dynamically, then $PoA>2$. 
\end{lemma}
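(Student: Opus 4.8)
The plan is to substitute the strict lower bound on the equilibrium cost established in Lemma \ref{General_lemma_bounds} directly into the price of anarchy expression already derived in \eqref{PoA_no_tardiness}. Since the denominator of \eqref{PoA_no_tardiness} is an explicit closed form and the numerator is a constant multiple of $c_e$, the argument reduces to a single algebraic manipulation followed by one inequality; no further probabilistic or queueing analysis is needed.

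First I would simplify \eqref{PoA_no_tardiness} by factoring the common term out of the denominator:
\begin{equation*}
PoA=\frac{(N+1)c_e}{\frac{N(N+1)}{2}\left(\gamma+\frac{\beta}{\mu}\right)}=\frac{2c_e}{N\left(\gamma+\frac{\beta}{\mu}\right)},
\end{equation*}
where the factor $(N+1)$ cancels between numerator and denominator. This isolates the dependence on $c_e$ in the cleanest possible form.

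Next I would invoke the left-hand inequality of \eqref{eq_ce_bound}, namely $c_e>N\left(\frac{\beta}{\mu}+\gamma\right)$, which is exactly the nontrivial content of Lemma \ref{General_lemma_bounds}. Substituting this strict bound gives
\begin{equation*}
PoA=\frac{2c_e}{N\left(\gamma+\frac{\beta}{\mu}\right)}>\frac{2N\left(\frac{\beta}{\mu}+\gamma\right)}{N\left(\gamma+\frac{\beta}{\mu}\right)}=2,
\end{equation*}
and the claim follows. The strictness of the conclusion is inherited directly from the strictness of the bound in Lemma \ref{General_lemma_bounds}.

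In this argument there is essentially no obstacle: the entire difficulty has already been absorbed into Lemma \ref{General_lemma_bounds}, whose proof uses the equilibrium characterization together with the observation that $t_b>N/\mu$ under the contrary assumption. The only points to verify here are that the denominator of \eqref{PoA_no_tardiness} is indeed the optimal cost under the dynamic policy (the fixed order cost $\tfrac{\gamma N(N+1)}{2}$ plus the dynamic waiting-and-tardiness optimum $c_{opt}=\tfrac{\beta N(N+1)}{2\mu}$, which holds because the central planner can keep the server continuously busy and thereby incur no queueing cost), and that the cancellation above is legitimate, i.e. that $N\geq 1$ and $\gamma+\beta/\mu>0$ so that the denominator is strictly positive.
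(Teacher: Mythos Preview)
Your proof is correct and follows exactly the approach the paper takes: the paper simply remarks that the inequality $c_e>N\left(\frac{\beta}{\mu}+\gamma\right)$ from Lemma \ref{General_lemma_bounds}, substituted into \eqref{PoA_no_tardiness}, yields the result directly. You have merely written out the one-line algebra that the paper leaves implicit.
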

We can conclude that even though the central planner cannot reduce the total index costs, the price of anarchy is still always greater than two, as in the model without them. This is another property that holds when generalizing the standard model to include index costs.

\section{Concluding Remarks}\label{sec_conclusion}
We have introduced a new cost model for the queueing arrival game, which penalizes late arrivals for their order of admittance, instead (or in addition) to the previously modelled tardiness costs. We have shown that there are several differences between these two models. In particular, if just order costs are assumed, than the arrival rate decreases exponentially, but does not reach zero at any finite time. Furthermore, we have shown that the equilibrium cost, in this case, depends solely on the order penalty parameter. We have also shown how the equilibrium behaviour changes when limiting the customer arrival interval, by not allowing to queue before opening time or by setting a closing time. The equilibrium arrival process characterized here is qualitatively similar to the standard tardiness model (\cite{JS2012} and \cite{H2013}), i.e. a uniform arrival distribution before the server opens and a decreasing arrival density (with the same functional form) afterwards. However, the index model is easier to numerically approximate because the initial conditions of the distribution are derived directly from the model assumptions. The role of the tardiness cost assumption has been motivated in the literature as an indirect proxy for an index cost, for example in the concert example. We have shown here how to take this cost into account directly. \\

There are a few avenues for further research on this model: considering non-linear cost functions, populations with non-homogeneous cost parameters or a general service distribution. In the case of non-linear waiting and order costs, necessary equilibrium conditions may be developed using the techniques presented here. However, it is hard to provide general sufficient conditions for the uniqueness or even existence of a Nash equilibrium. This analysis may be carried out for special cases, such as a quadratic cost function. It is reasonable to assume that customers differ in their utility functions, and so the analysis of non-homogeneous customer types is much called for. This has been considered lately in a queueing context by Guo and Hassin in \cite{GH2012}, and in the closely related queueing arrival game in \cite{JJS2011}. The latter studies the equilibrium arrival pattern in the fluid setting, and extending this analysis to discrete customer models like the one presented in this paper suggests an interesting research challenge. An additional question that stems directly from our research is that of characterizing the quasi-stationary distribution of the queueing process, as defined in \cite{vDP2013}.

\section*{Acknowledgments}
The author would like to thank Moshe Haviv, Binyamin Oz, Refael Hassin and two anonymous referees for their valuable comments and advice throughout this work. The author gratefully acknowledges the financial support of the Israel Science Foundation grant no. 1319/11 and the Center for the Study of Rationality in the Hebrew University of Jerusalem.
{\footnotesize\bibliography{WhenToBib}}

\end{document}